\documentclass[letterpaper, 10 pt, journal, twoside]{IEEEtran} 
\setlength{\columnwidth}{21pc}

\IEEEoverridecommandlockouts             

\pdfobjcompresslevel=0                  %
\usepackage{bm}
\usepackage{enumerate}
\usepackage{commath}
\usepackage{graphicx}
\usepackage{amsmath}
\usepackage{subcaption}
\usepackage{breqn}
\usepackage{cite}
\usepackage[table]{xcolor}
\usepackage{booktabs}
\usepackage{empheq}
\usepackage{amsfonts}
\usepackage{amssymb}

\usepackage{amsthm}
\usepackage{hyperref}
\usepackage{xcolor}
\usepackage{mathrsfs}
\usepackage{soul}

\usepackage{accents}
\usepackage{thmtools}
\usepackage{thm-restate}
\usepackage{float}
\usepackage{comment}
\usepackage[normalem]{ulem}
\usepackage{algorithm}
\usepackage{algpseudocode}

\usepackage[normalem]{ulem}

\theoremstyle{plain}

\newtheorem{lemma}{Lemma}

\newtheorem{assumption}{Assumption}

\newtheorem{proposition}{Proposition}

\newtheorem*{problem*}{Problem}
\newtheorem*{theorem*}{Theorem}
\newtheorem{assumption*}{Assumption}

\declaretheorem[name=Theorem]{thm}
\newtheorem{remark}{Remark}

\theoremstyle{definition}

\definecolor{amber}{rgb}{1.0, 0.3, 0.0}

\newcommand{\myvar}[1]{\bm{#1}}

\newcommand{\myset}[1]{\mathcal{#1}} %

\newcommand{\sign}[1]{\textup{sign}{#1}}

\newcounter{MYtempeqncnt}

\begin{document}

\title{\LARGE
  A continuous-time violation-free multi-agent optimization algorithm \\ and its applications to safe distributed control
}

\author{Xiao Tan, Changxin Liu, Karl H. Johansson, and Dimos V. Dimarogonas %
\thanks{ This work was supported in part by Swedish Research Council, ERC CoG LEAFHOUND, EU CANOPIES Project,  Knut and Alice Wallenberg Foundation, and an NSERC Postdoctoral Fellowship. The authors are with the School of EECS, KTH Royal Institute of Technology, 100 44 Stockholm, Sweden (Email: 
        {\tt\small xiaotan, changxin, kallej, dimos@kth.se}).}
}

\maketitle
\thispagestyle{plain}
\pagestyle{plain}

\begin{abstract}

  In this work, we propose a continuous-time distributed optimization algorithm with guaranteed zero coupling constraint violation and apply it to safe distributed control in the presence of multiple control barrier functions (CBF). The optimization problem is defined over a network that collectively minimizes a separable cost function with coupled linear constraints. An equivalent optimization problem with auxiliary decision variables and a  decoupling structure is proposed. A sensitivity analysis demonstrates that the subgradient information can be computed using local information.  This then leads to a subgradient algorithm for updating the auxiliary variables. A case with sparse coupling constraints is further considered, and it is shown to have better memory and communication efficiency. For the specific case of a CBF-induced time-varying quadratic program (QP), an update law is proposed that achieves finite-time convergence. Numerical results involving a static resource allocation problem and a safe coordination problem for a multi-agent system demonstrate the efficiency and effectiveness of our proposed algorithms.

\end{abstract}

\section{Introduction}

Distributed optimization has received increasing attention in the last few decades thanks to increasing computational power availability. Depending on how the  problem is formulated, distributed optimization can be categorized into the \textit{cost-coupled}  and  \textit{constraint-coupled} optimization \cite{notarnicola2019constraint}. In the former case,  the cost function to be minimized is a summation of local cost functions, each of which depends on a common decision variable $\myvar{x}$ subject to a common constraint, that is,
\begin{equation} \label{eq:general_cost_coupled}
    \min_{\myvar{x}\in X} \sum_{i\in \{1,2,...N\}} f_i(\myvar{x}),
\end{equation}
where  the local cost function $f_i$ and global constraint set $X$ are known to agent $i$. One such example is the training process of learning algorithms, where the decision variables are the parameters in the learning model and the local cost function is associated with data accessible only to local computational units. Much attention has been focused on this case in the distributed optimization community; see \cite{nedic2018distributed,yang2019survey} for overviews.

In the constraint-coupled case, each node has its own local decision variable, and the cost function is a summation of local cost functions that depend on the local decision variables only. The  constraints in this case are coupled and involve all or parts of the local variables. The problem setup is then given by 
\begin{equation} \label{eq:general_constraint_coupled}
       \min_{(\myvar{x}_1,...,\myvar{x}_N)\in X} \sum_{i\in \{1,2,...N\}} f_i(\myvar{x}_i),
\end{equation}
where the local cost function $f_i$ and parts of the constraint set $X$ are known to agent $i$. Conceptually, we can refer to the stack of all local variables as the global variable. One example of this setup is the economic dispatch problem in power systems, where the local decision variables are constrained by the shared resource limit, and the size of the global decision variable increases with the size of the power grid.

Albeit its wide outreach in real-time decision-making and profound practical implications, the constraint-coupled optimization problem \eqref{eq:general_constraint_coupled} was less attended to compared to the vast literature on cost-coupled optimization problem \eqref{eq:general_cost_coupled}. One viable approach to \eqref{eq:general_constraint_coupled}
involves solving its dual problem. Assuming that the optimization problem \eqref{eq:general_constraint_coupled} has zero duality gap and the constraint functions are separable, then the dual problem of \eqref{eq:general_constraint_coupled} has a separable objective function with a common decision variable (the dual variable), aligning the problem with the formulation of \eqref{eq:general_cost_coupled}. Thanks to this observation, many distributed optimization algorithms for \eqref{eq:general_cost_coupled} can be readily applied to solve \eqref{eq:general_constraint_coupled} by tackling its dual problem \cite{falsone2017dual,simonetto2016primal,mateos2016distributed,li2020distributed}. 
One major drawback of these duality-based approaches is that, primal convergence is not easily retrieved from dual solutions, known as the primal recovery problem \cite{nedic2009approximate}. The remedy usually involves a running average scheme, which leads to a slow convergence rate. Recent works in \cite{su2021distributed,falsone2023augmented} have proposed various methods to avoid the recovery procedure. Another issue is that, in order to solve for the local primal variable, each node has to solve for a consensus on the dual variable, causing efficiency concerns.

In order to avoid the primal recovery problem as well as enhance efficiency, recently, primal decomposition approach has been pursued as a general distributed solution to the constraint-coupled problem \cite{notarnicola2019constraint}. Instead of looking at the dual problem, primal decomposition approach splits the coupling but separable constraints into local ones using auxiliary variables. The algorithm usually comprises two parts: 1) solving/updating the local primal variables based on local problems, and 2) updating the auxiliary variables. This approach has its roots in large-scale optimization, where the global variable is too large for one computational node to process and  a central coordinating node is required for updating the auxiliary variables. See, for example,  \cite{lasdon2002optimization} for more details. It is until recently that \cite{notarnicola2019constraint} proposes a distributed auxiliary update and provides an asymptotic convergence guarantee. \cite{wang2023distributed} further extends this approach and provides a convergence rate analysis.

One issue that has been largely overlooked in the existing distributed optimization literature is primal feasibility throughout solution iterations. Most algorithms mentioned above or surveyed in \cite{nedic2018distributed,yang2019survey} at best provide an asymptotic convergence guarantee and asymptotic primal feasibility guarantee. This might not be desirable when applying optimization-based algorithms in real-time safe-critical scenarios, i.e., when transient behavior becomes a  crucial safety concern and constraint violation could lead to catastrophic consequences. Conceptually, the problem is challenging since the feasible set is defined over the global decision variable and cannot be evaluated for satisfaction using only local information, which voids the usual practice of projecting the solution onto the feasible set \cite{liu2015second}.

  There are only a few distributed violation-free algorithms for constraint-coupled optimization problems. Recently, \cite{wu2022distributed} proposes a right-hand primal decomposition scheme, yet it only achieves an approximate optimal solution. \cite{abeynanda2023primal} considers primal feasibility in dual decomposition methods, where the solution still only converges to a neighborhood of optimal solution. \cite{tan2021distributed} proposes a violation-free algorithm with optimality guarantee, but only for a single constraint.  A highly relevant work \cite{mestres2023distributed} adopts a similar primal decomposition scheme as in this paper, and uses a cascaded law based on projected saddle-point dynamics and safe gradient flow for updating the auxiliary and primal variables. 
  However, only practical convergence can be established due to a regularization step. The  violation-free  Laplacian-based gradient algorithm in \cite{cherukuri2015distributed}  instead focuses on the special case of economic dispatch problem. In summary, the existing violation-free results either give inexact solutions or are tailored to specific problems. One exception is our recent work  \cite{liu2024achieving} where the focus is on designing a discrete-time, violation-free, distributed and accelerated optimization algorithm. We note that technicality concerns differ significantly depending on whether the dynamics is continuous-time or discrete-time.

In this work, we propose a distributed, continuous-time optimization algorithm for problem \eqref{eq:general_constraint_coupled} with coupling linear constraints. Our contributions are summarized as follows.

\begin{enumerate}
    \item The coupling constraints hold during the solution evolution, and the iterated variable is shown to converge to the optimal solution under a few mild assumptions;
    \item The proposed algorithm is more efficient in memory and communication compared to existing results. Each node only needs to store a variable of size $d_i + M$ and transmit a variable of size $2M$ to its neighbors, where $d_i$ is the size of the local primal variable and $M$ the number of coupling constraints. This is notably smaller compared to, e.g., primal decomposition algorithms \cite{notarnicola2019constraint} and dual decomposition algorithms \cite{falsone2017dual};
    \item We apply the distributed optimization algorithm to the CBF-induced QP as feedback in safe distributed control. To accommodate the slowly time-varying parameters in QP, we further strengthen our proposed algorithm such that finite-time convergence is achieved.

\end{enumerate}

   The organization of the paper goes as follows. We first present  problem formulation in Section~\ref{sec:setup} as well as some leading assumptions. The proposed primal decomposition scheme and the corresponding updates of auxiliary variables are detailed in Section~\ref{sec:equiv_program} and Section~\ref{sec:sensitivity}, respectively. The overall algorithm and the convergence analysis are discussed in Section~\ref{sec:proposed_algorithm}. Two special cases  are further considered in the following sections, focusing on the sparely coupled case and the CBF-induced QP case. Numerical results are shown in Section~\ref{sec:simulation} to demonstrate the effectiveness and the favorable properties of our proposed algorithms. 
   
\section{Preliminaries and problem setup}\label{sec:setup}

    \textit{Notations:}  $\mathbb{R}$ is the set of real numbers, and $\overline{\mathbb{R}} = \mathbb{R}\cup \{-\infty, \infty\}$.  $\mathbb{R}^n$ denotes the $n$ dimensional Euclidean vector space. Each vector $\myvar{a} = (a_1, a_2, ..., a_n)\in \mathbb{R}^n$ is interpreted as a column vector if not stated otherwise. $\textup{diag}(\cdot)$ is a function that only keeps the diagonal entries and sets the off-diagonal entries zero.   $\text{blkdiag}(\myvar{g}_1, \myvar{g}_2, ..., \myvar{g}_n )$ denotes a block diagonal matrix with diagonal blocks $\myvar{g}_1, \myvar{g}_2, ..., \myvar{g}_n$, where $\myvar{g}_i, i = 1,..,n,$ can be either a vector or a matrix. For $x\in \mathbb{R}$, $\displaystyle \sign(x) :=\left\{ \begin{smallmatrix} 1, &x >0; \\
0, &x = 0;\\
-1, &x<0.\end{smallmatrix}\right.$ For any $\myvar{x}= (x_1,..., x_n)\in \mathbb{R}^n$, $\sign(\myvar{x}):=(\sign(x_1), \sign(x_2), ...,\sign(x_n))$.
Vector inequalities are to be interpreted element-wise. $\myvar{0}$ and $ \myvar{1}$ refer to vectors of proper dimensions with all entries to be $0$ or $1$, respectively. 
Given a function $f: \mathbb{R}^n \to \overline{\mathbb{R}}$, $\partial f(\myvar{x})$, the subdifferential of $f$ at $\myvar{x}$, is defined as the set of vectors $\myvar{s}$ such that $f(\myvar{x}^\prime)\geq f(\myvar{x}) + \myvar{s}^\top(\myvar{x}^\prime - \myvar{x})$ for all $\myvar{x}^\prime \in \mathbb{R}^n$. Such a vector $\myvar{s}$ is called a subgradient of $f$ at $\myvar{x}$. When $f(\myvar{x})$ is differentiable,  the gradient of $f$ is denoted $\nabla f$ and the partial derivative $\frac{\partial f}{ \partial x_i}$. For two  matrices $A$ and $B$ of proper dimensions, we use the convention that $\begin{bmatrix}
    A, B
\end{bmatrix}$ denotes the horizontal stacking, and $\begin{bmatrix}
    A; B
\end{bmatrix}$ the vertical stacking. Let $A\in \mathbb{R}^{m\times n}$ be a matrix and $\mathbf{I}\subseteq \{1,2,...,m\}$ be an index set. Then, $A^{\mathbf{I}}$ denotes a matrix composed by the rows indexed in $\mathbf{I}$. Note that $A^{\mathbf{I}} = I^{\mathbf{I}}A$ where $I$ is an identity matrix of proper dimension.

\subsection{Problem setup}
Consider the following multi-agent optimization problem
\begin{equation} \label{eq:centralized_problem}
\begin{split}
    &\min_{\myvar{x}_1,\dots,\myvar{x}_N} \sum_{i\in\myset{I}}  f_i(\myvar{x}_i) \\
    & {s.t.} \quad \left\{
    \begin{array}{cc}
      \sum_{i\in\myset{I}} {\myvar{a}_i^{1 \top}}\myvar{x}_i + b_i^1  \leq 0, \\
        \vdots \\
         \sum_{i\in\myset{I}} {\myvar{a}_i^{m \top}}\myvar{x}_i + b_i^m  \leq 0, \\
         \vdots \\
       \sum_{i\in\myset{I}} {\myvar{a}_i^{M \top}}\myvar{x}_i + b_i^M  \leq 0,
    \end{array}
    \right.
\end{split}
\end{equation}
where $\myset{I} = \{ 1,2,...,N\}$, and there are $M$ linear coupling constraints indexed by $\myset{M} = \{1,2,...,M\}$. Here $\myvar{a}_i^m, \myvar{x}_i\in \mathbb{R}^{d_i}, d_i\in \mathbb{N},  b_i^m\in \mathbb{R}, \ \forall i\in \myset{I}, m\in \myset{M}$. For  brevity, denote $\myvar{x} = [\myvar{x}_{1};\myvar{x}_{2};...; \myvar{x}_{N} ]$, $\myvar{a}^m = [\myvar{a}_1^{m};
 \myvar{a}_2^{m}; ...; \myvar{a}_N^{m}], $ $\myvar{b}^m = (b_1^m, ..., b_N^m)$, $m\in \myset{M}$, $\myvar{a}_i = [\myvar{a}_i^{1};
 \myvar{a}_i^{2}; ...; \myvar{a}_i^{M}]$, $\myvar{b}_i = (b_i^1, ..., b_i^M)$, $i\in \myset{I}$. Only $\myvar{a}_i, \myvar{b}_i, f_{i}$ are known to agent $i$. 

The  $N$ computational nodes communicate over a connected and undirected graph
 $\mathcal{G} = (\myset{I},E)$. $(i,j)\in E$ represents that the agents $i,j$ can communicate with each other. The associated Laplacian matrix \cite{mesbahi2010graph} is denoted $L$, and $\myvar{l}_i$ is its $i$-th row. We make the following assumptions. 

\begin{assumption}\label{ass:convex_f}
For all $i\in \myset{I}$, each function $f_i(\myvar{x}_i)$ is convex.
\end{assumption}

\begin{assumption}\label{ass:finite_cost}
The optimal value of \eqref{eq:centralized_problem} is finite.
\end{assumption}

Stacking $\{ \myvar{a}_{i}^{m}\}_{i\in \myset{I},m\in \myset{M}}$ and  $\{ b_{i}^{m}\}_{i\in \myset{I},m\in \myset{M}}$ by the order of the agents, we obtain $\myvar{a}_{A} = [\myvar{a}_{1};
 \myvar{a}_{2}; ...;\myvar{a}_{N}] $, $\myvar{b}_{A} = [\myvar{b}_{1}; 
 \myvar{b}_{2}; ...; \myvar{b}_{N}] $.
Similarly, stacking $\{ \myvar{a}_{i}^{m}\}_{i\in \myset{I},m\in \myset{M}}$ and  $\{ b_{i}^{m}\}_{i\in \myset{I},m\in \myset{M}}$ by the order of  the constraints, we obtain $\myvar{a}_{C} = [\myvar{a}^{1}; 
 \myvar{a}^{2}; ...; \myvar{a}^{M}] $, $\myvar{b}_{C} = [\myvar{b}^{1}; 
 \myvar{b}^{2}; ...; \myvar{b}^{M}] $. 
 There exists a constant permutation matrix\footnote{A permutation matrix $J$ is a square binary matrix that has exactly one entry of $1$ in each row and each column and zeros elsewhere. A permutation matrix is orthogonal, i.e., $J J^\top = J^\top J =  I$. } $J$ such that $\myvar{b}_C = J \myvar{b}_A$. Note that $J$ only depends on $ N$ and $ M$ and is independent of the communication graph $\mathcal{G}.$

In this work, we aim to find a continuous-time distributed algorithm that solves \eqref{eq:centralized_problem} while satisfying all coupling constraints for all time. 
In \cite{tan2021distributed}, a special case of \eqref{eq:centralized_problem} with quadratic cost and a single coupling constraint is investigated.
However, the approach in \cite{tan2021distributed} is difficult to generalize to convex programs with multiple coupling constraints, as studied in this work.

Our proposed approach is based on two key observations. 1) By introducing a set of auxiliary variables as decision variables, an equivalent optimization problem can be constructed with separable costs and constraints. 2) When viewing these auxiliary variables as independent variables, the subgradient of the optimal value with respect to these variables can be computed using  local information. The first fact motivates the primal decomposition scheme. The second fact motivates the distributed updated law for these variables leveraging subgradient algorithms. The convergence and optimality of this approach are later shown using tools from nonsmooth analysis. Two special cases with sparse coupling constraints or quadratic cost functions are also discussed in the sequel.

\section{Equivalent Problem} \label{sec:equiv_program}
In this section we present a new  optimization problem that is equivalent to \eqref{eq:centralized_problem} but with a separable structure. Similar decomposition schemes were also discussed in \cite[Proposition 1]{tan2021distributed} and \cite[Proposition 4.1]{mestres2023distributed}. Introduce auxiliary decision variables $\{ y_i^m \}_{i\in \myset{I}, m\in \myset{M}}$ where $y_i^m \in \mathbb{R}$. Denote $\myvar{y}^{m} = (y^m_1, ..., y_N^m) \in \mathbb{R}^N, m\in \myset{M}$, and $\myvar{y} = [\myvar{y}^{1};\myvar{y}^{2};...; \myvar{y}^{M} ] $. Consider
\begin{equation}
\label{eq:reformulated_problem}
\begin{split}
    &\min_{\myvar{x}, \myvar{y}^1,\dots,\myvar{y}^M} \sum_{i\in\myset{I}} f_i(\myvar{x}_i)   \\
    & {s.t.} \quad \left\{
    \begin{array}{cc}
          {A^1}^{\top}\myvar{x} + L \myvar{y}^1 + \myvar{b}^1 \leq \myvar{0}  \\
         \vdots \\
          {A^M}^{\top}\myvar{x} + L\myvar{y}^M + \myvar{b}^M \leq \myvar{0}
    \end{array}
    \right.
\end{split}
\end{equation}
where recall that $L$ is the graph Laplacian matrix, $A^m = {\rm blkdiag}(\myvar{a}_1^m, \dots, \myvar{a}^m_N) \in \mathbb{R}^{(\sum_i d_i)\times N}$.

When organizing the constraints by the order of agents, the problem can be rewritten as 
\begin{equation}
\label{eq:reformulated_problem_agent} 
\begin{split}
        &\min_{\myvar{x}, \myvar{y}^1,\dots,\myvar{y}^M} \sum_{i\in \myset{I}} f_i(\myvar{x}_i)  \\
    & {s.t.} \quad \left\{
    \begin{array}{cc}
          {A}_1^{\top}\myvar{x}_1 + (I_M \otimes \myvar{l}_1) \myvar{y}  + \myvar{b}_1 \leq \myvar{0}  \\
         \vdots \\
         {A}_i^{\top}\myvar{x}_i + (I_M \otimes \myvar{l}_i) \myvar{y}  + \myvar{b}_i \leq \myvar{0} \\
         \vdots \\
          {A}_N^{\top}\myvar{x}_N + (I_M \otimes \myvar{l}_N) \myvar{y}  + \myvar{b}_N \leq \myvar{0}
    \end{array}
    \right. 
\end{split}
\end{equation}
where recall that $\myvar{l}_i$ is the $i$th row of the graph Laplacian, $A_i = \begin{bmatrix}
    \myvar{a}_i^{1}, \myvar{a}_i^{2}, ...,  \myvar{a}_i^M
\end{bmatrix} \in \mathbb{R}^{d_i\times M}$.
 One verifies that the optimization problems in \eqref{eq:reformulated_problem}, 
\eqref{eq:reformulated_problem_agent} are the same up to a permutation of constraints. An illustration of the decomposition approach in shown in Fig.~\ref{fig:decomposition}. 

\begin{figure}
    \centering
    \includegraphics[width=0.9\linewidth]{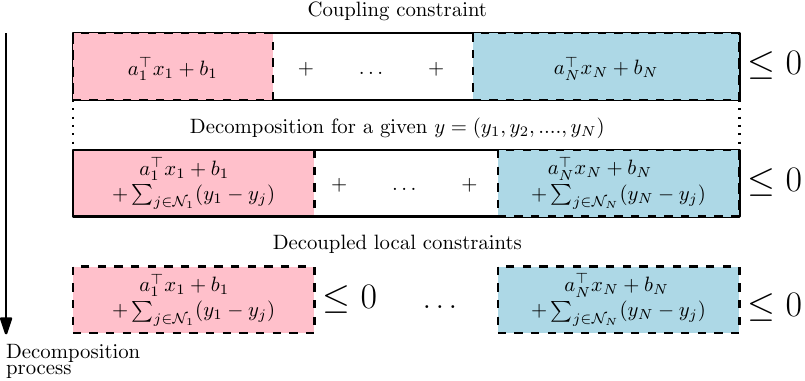}
    \caption{Illustration of the constraint decomposition scheme. Here only one coupling constraint is considered and the superscript is neglected for simplicity. By introducing an auxiliary variable $\myvar{y}$, one coupling constraint becomes $N$ local constraints. Proposition~\ref{prop:equivalent_problem} shows that when $\myvar{y}$ is a decision variable, the two class of constraints represent the same feasible region in $\myvar{x}$. Our proposed algorithm locally updates $y_i$ to obtain an ``optimal" constraint decomposition in the sense that $\myvar{a}_i^\top \myvar{x}_i^\star + b_i + \sum_{j\in \mathcal{N}_i} (y_i - y_j) \leq 0$ for all $i\in \mathcal{I}$, where $\myvar{x}_i^\star$ is the optimal solution to the original optimization problem.    }
    \label{fig:decomposition}
\end{figure}

 \begin{proposition}[Equivalence] \label{prop:equivalent_problem}
    The two optimization problems in \eqref{eq:centralized_problem} and \eqref{eq:reformulated_problem} are equivalent, in the sense that
    \begin{enumerate}
        \item for any feasible solution   $(\myvar{x}^{\prime}, \myvar{y}^{\prime}  )$ to \eqref{eq:reformulated_problem}, $\myvar{x}^{\prime}$ is a feasible solution to \eqref{eq:centralized_problem};
        \item for any feasible solution $\myvar{x}^{\prime}$ to \eqref{eq:centralized_problem}, there exists a  $\myvar{y}^{\prime} \in \mathbb{R}^{NM}$ such that $(\myvar{x}^{\prime}, \myvar{y}^{\prime}  )$ is a feasible solution to \eqref{eq:reformulated_problem};
        \item the two problems have the same cost function.
    \end{enumerate}
\end{proposition}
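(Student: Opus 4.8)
The plan is to verify the three claims in turn; claim (3) is immediate, claim (1) follows from a summation argument, and claim (2)---the construction of a feasible $\myvar{y}'$---carries the only real content. For claim (3), the objective $\sum_{i\in\myset{I}} f_i(\myvar{x}_i)$ is literally the same in \eqref{eq:centralized_problem} and \eqref{eq:reformulated_problem} and the auxiliary variables do not enter it, so any feasible pair $(\myvar{x}',\myvar{y}')$ and its $\myvar{x}'$ attain equal cost.

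For claim (1), I would first unpack the block structure: since $A^m = {\rm blkdiag}(\myvar{a}_1^m,\dots,\myvar{a}_N^m)$, the $i$-th row of ${A^m}^\top\myvar{x}$ equals $\myvar{a}_i^{m\top}\myvar{x}_i$, so the $m$-th vector inequality of \eqref{eq:reformulated_problem} reads row-by-row as
\[
\myvar{a}_i^{m\top}\myvar{x}_i + \myvar{l}_i\myvar{y}^m + b_i^m \le 0, \quad i\in\myset{I}.
\]
Summing over $i\in\myset{I}$ and using $\myvar{1}^\top L = \myvar{0}^\top$, which gives $\sum_{i}\myvar{l}_i\myvar{y}^m = \myvar{1}^\top L\myvar{y}^m = 0$, collapses these $N$ local inequalities into the single coupling constraint $\sum_{i\in\myset{I}}(\myvar{a}_i^{m\top}\myvar{x}_i + b_i^m)\le 0$ of \eqref{eq:centralized_problem}; hence $\myvar{x}'$ is feasible for \eqref{eq:centralized_problem}.

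Claim (2) is the substantive step. Fixing $m$, set $\myvar{c}^m := {A^m}^\top\myvar{x}' + \myvar{b}^m \in \mathbb{R}^N$, whose $i$-th entry is $\myvar{a}_i^{m\top}\myvar{x}_i' + b_i^m$, so feasibility of $\myvar{x}'$ for \eqref{eq:centralized_problem} is exactly $\myvar{1}^\top\myvar{c}^m \le 0$. I need a $\myvar{y}^m$ with $L\myvar{y}^m \le -\myvar{c}^m$. The key fact is that for the connected undirected graph $\mathcal{G}$, $L$ is symmetric with $\mathrm{null}(L) = \mathrm{span}(\myvar{1})$, hence $\mathrm{range}(L) = \{\myvar{1}\}^\perp = \{\myvar{v} : \myvar{1}^\top\myvar{v} = 0\}$. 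It therefore suffices to exhibit a target $\myvar{v}^m \perp \myvar{1}$ with $\myvar{v}^m \le -\myvar{c}^m$, since any such vector is realizable as $L\myvar{y}^m$. The choice $\myvar{v}^m := -\myvar{c}^m + \tfrac{1}{N}(\myvar{1}^\top\myvar{c}^m)\myvar{1}$ works: a direct computation gives $\myvar{1}^\top\myvar{v}^m = 0$, while $\myvar{v}^m - (-\myvar{c}^m) = \tfrac{1}{N}(\myvar{1}^\top\myvar{c}^m)\myvar{1}$ is entrywise nonpositive because $\myvar{1}^\top\myvar{c}^m \le 0$. Solving $L\myvar{y}^m = \myvar{v}^m$ for each $m\in\myset{M}$ and stacking produces the required $\myvar{y}' \in \mathbb{R}^{NM}$.

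The only genuine obstacle is the range characterization $\mathrm{range}(L) = \{\myvar{1}\}^\perp$ used in claim (2); this is exactly where the standing connectivity assumption on $\mathcal{G}$ is essential, and everything else is bookkeeping. I would state this Laplacian property explicitly (with the graph-theory reference) so that the nonpositive-slack construction renders the existence of $\myvar{y}'$ constructive rather than merely existential.
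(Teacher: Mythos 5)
Your proposal is correct and follows essentially the same route as the paper's proof: claim (1) by summing the local constraints and using $\myvar{1}^\top L = \myvar{0}$, and claim (2) by centering the residual $\myvar{c}^m = {A^m}^\top\myvar{x}' + \myvar{b}^m$ at its average and invoking $\mathrm{range}(L) = \{\myvar{z}:\myvar{1}^\top\myvar{z}=0\}$ for a connected undirected graph. Indeed, your target vector $-\myvar{c}^m + \tfrac{1}{N}(\myvar{1}^\top\myvar{c}^m)\myvar{1}$ is exactly the paper's $\myvar{w}^m - \myvar{v}^m$, so the constructions coincide.
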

\begin{proof}
    See Appendix.
\end{proof}

If $\myvar{y}^m, m\in \myset{M},$ are viewed as independent variables instead of decision variables, then the problem \eqref{eq:reformulated_problem_agent} has a natural separable structure for a given $\myvar{y}$. In the following, we consider a distributed solution to \eqref{eq:centralized_problem}, composed of the local optimization problems for agent $ i\in \myset{I}$
\begin{equation} \label{eq:local_problem}
\begin{split}
    &\hspace{3.5cm} \min_{\myvar{x}_i} f_i(\myvar{x}_i) \\
    & {s.t.} \quad \left\{
    \begin{array}{cc}
    {\myvar{a}_i^1}^{\top}\myvar{x}_i + \sum_{j\in\mathcal{N}_i}(y_i^1-y_j^1)+ b_i^1  \leq 0 \\
        \vdots
       \\
{\myvar{a}_i^M}^{\top}\myvar{x}_i + \sum_{j\in\mathcal{N}_i}(y_i^M-y_j^M) + b_i^M  \leq 0
    \end{array}
    \right.
\end{split}
\end{equation}
 and  some local update law for the variable $y_i^m, i\in \myset{I}, m\in \myset{M},$ that will be detailed in the next section. We note that this local optimization problem will give an optimal $\myvar{x}_i$ if and only if $\myvar{y}^m, m\in \myset{M}$ is optimal to the problem in \eqref{eq:reformulated_problem} in view of Proposition~\ref{prop:equivalent_problem}. Nevertheless, for any given $\myvar{y}$, we have the following result.

\begin{proposition}[Constraint satisfaction] \label{prop:constraint_satisfaction}
    For a given $\myvar{y}$, if $\myvar{x}_i$, $ i\in\myset{I}$, is feasible to the local optimization problem in \eqref{eq:local_problem}, then the stacked vector $[\myvar{x}_1; \myvar{x}_2; ...; \myvar{x}_N]$ is a feasible solution to \eqref{eq:centralized_problem}.
\end{proposition}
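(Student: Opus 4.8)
The plan is to argue constraint-by-constraint and exploit the zero row/column-sum (conservation) property of the graph Laplacian. Fix an arbitrary $m \in \myset{M}$. By hypothesis each $\myvar{x}_i$ is feasible to the local problem \eqref{eq:local_problem}, so
\[
{\myvar{a}_i^m}^\top \myvar{x}_i + \sum_{j \in \mathcal{N}_i}(y_i^m - y_j^m) + b_i^m \leq 0
\]
holds for every $i \in \myset{I}$. The first step is simply to sum these $N$ inequalities over $i \in \myset{I}$, which preserves the inequality direction.

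The key step is that the total contribution of the auxiliary variables vanishes. One can either observe directly that, since $\mathcal{G}$ is undirected, each edge $(i,j)$ contributes $(y_i^m - y_j^m)$ to agent $i$'s term and $(y_j^m - y_i^m)$ to agent $j$'s term, so the two cancel pairwise; or, more compactly, write $\sum_{j \in \mathcal{N}_i}(y_i^m - y_j^m) = \myvar{l}_i \myvar{y}^m$ and use $\sum_{i \in \myset{I}} \myvar{l}_i \myvar{y}^m = \myvar{1}^\top L \myvar{y}^m = 0$, where $\myvar{1}^\top L = \myvar{0}$ follows from $L = L^\top$ and $L\myvar{1} = \myvar{0}$. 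After this cancellation the summed inequality reduces to
\[
\sum_{i \in \myset{I}} {\myvar{a}_i^m}^\top \myvar{x}_i + \sum_{i \in \myset{I}} b_i^m \leq 0,
\]
which is exactly the $m$-th coupling constraint of \eqref{eq:centralized_problem}.

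Since $m$ was arbitrary, all $M$ coupling constraints hold for the stack $[\myvar{x}_1; \myvar{x}_2; \dots; \myvar{x}_N]$, so it is feasible for \eqref{eq:centralized_problem}. I do not anticipate a genuine obstacle here: the only point needing care is identifying the local coupling term with a Laplacian row and invoking $\myvar{1}^\top L = \myvar{0}$, after which the conclusion is a monotone summation of inequalities. Indeed, this is precisely the forward implication (item~1) of Proposition~\ref{prop:equivalent_problem} specialized to a fixed $\myvar{y}$, since collecting the local constraints \eqref{eq:local_problem} over all $i$ reproduces the constraints of \eqref{eq:reformulated_problem_agent}; the self-contained summation argument above is, however, the most transparent route and does not rely on $\myvar{y}$ being a decision variable.
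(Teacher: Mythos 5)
Your proof is correct and follows essentially the same route as the paper: fix the $m$-th constraint, sum the local constraints \eqref{eq:local_problem} over all agents, and use $\sum_{i\in\myset{I}} \myvar{l}_i \myvar{y}^m = \myvar{1}^\top L \myvar{y}^m = 0$ to cancel the auxiliary terms. The pairwise edge-cancellation remark is a nice elementary restatement of the same fact, but it does not change the argument.
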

\begin{proof}
  Consider the $m$-th constraint in \eqref{eq:centralized_problem}. By summing up the $m$-th constraint of \eqref{eq:local_problem} for each agent, we have $\sum_{i\in \myset{I}} \myvar{a}_i^{m \top} \myvar{x}_i + \sum_{i\in \myset{I} }\myvar{l}_i \myvar{y}^m + \sum_{i\in \myset{I}} b_i^m =\sum_{i\in \myset{I}} \myvar{a}_i^{m \top} \myvar{x}_i + b_i^m \leq 0$, considering  that $\sum_{i\in \myset{I} }\myvar{l}_i \myvar{y}^m = \myvar{1}^\top L \myvar{y}^m = 0$.
\end{proof}

\section{Sensitivity analysis of local and global optimization problems} \label{sec:sensitivity}
In this section, we will investigate how $\myvar{y}$ affects the optimal value of the local optimization problems. We will propose an updating scheme for $\myvar{y}$  such that the sum of the optimal values of the local problems always decreases until reaching the optimal value of the centralized problem.

 \begin{assumption}\label{ass:local_slater}
     For any $\myvar{y}\in \mathbb{R}^{NM}$, Slater's conditions hold for all local optimization problems, i.e., there exists $\myvar{x}_i$ that is strictly feasible to \eqref{eq:local_problem}.
 \end{assumption}
One sufficient condition for this assumption to hold is that $\text{Rank}(A_i) = M, \forall i\in \myset{I}$. In the following we assume Assumption~\ref{ass:local_slater} holds.  For a given $\myvar{y}$, consider the local problem \eqref{eq:local_problem} 
 \begin{equation} \label{eq:local_QP_phi}
   \begin{aligned}
    \phi_i(\myvar{y}) &:=    \min  f_i(\myvar{x}_i)  \\
    & s.t. \ \ {A}_i^\top \myvar{x}_i + ({I}_M \otimes \myvar{l}_i)\myvar{y} + \myvar{b}_i\leq 0.
    \end{aligned} 
\end{equation}

Under Assumption~\ref{ass:local_slater}, $\phi_i(\myvar{y})$ is well-defined for all $\myvar{y}\in \mathbb{R}^{NM}$. Given $\myvar{y}\in \mathbb{R}^{NM}$, define 
\begin{equation} \label{eq:phi_y}
    \phi(\myvar{y}) = \sum_{i\in \myset{I}} \phi_i(\myvar{y})
\end{equation} 
In general, $\phi(\myvar{y})$ is not differentiable. In the following, we use tools from  nonsmooth analysis \cite{clarke2008nonsmooth}. Let the subdifferential, i.e., the set of all subgradients, of $\phi(\myvar{y})$ be $\partial \phi(\myvar{y})$. We have the following results characterizing the subdifferential of  $\phi(\myvar{y})$. 

\begin{proposition}[Subdifferential] \label{prop:subdifferential}
    Let Assumptions~\ref{ass:convex_f},~\ref{ass:finite_cost}, and~\ref{ass:local_slater} hold.  Denote by $\myvar{c}_i = (c_i^1, ..., c_i^M)\in \mathbb{R}^M$ a Lagrange multiplier to \eqref{eq:local_problem} for $i\in \myset{I}$. Define $\zeta_i^m = \sum_{j\in N_i} (c_i^m - c_j^m)$, $\myvar{\zeta}^m = (\zeta_1^m, \zeta_2^m,..., \zeta_N^m), \myvar{\zeta} = \begin{bmatrix}
    \myvar{\zeta}^1;  \myvar{\zeta}^2;  ...;  \myvar{\zeta}^M
\end{bmatrix}$. Then 
\begin{enumerate}
    \item $\myvar{c}_i\in \mathbb{R}^M$, not necessarily unique, always exists,
    \item $\phi(\cdot)$ is convex, but not strongly convex,
    \item $\myvar{\zeta} \in \partial \phi(\myvar{y}).$
\end{enumerate}
   
\end{proposition}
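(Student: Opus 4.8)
The plan is to express each local value function $\phi_i$ through its Lagrangian dual, exploiting the fact that the variable $\myvar{y}$ enters the constraints of \eqref{eq:local_QP_phi} affinely. Introducing the dual function
\[
h_i(\myvar{c}_i) := \inf_{\myvar{x}_i}\left[ f_i(\myvar{x}_i) + \myvar{c}_i^\top A_i^\top \myvar{x}_i \right],
\]
strong duality should yield the single identity
\[
\phi_i(\myvar{y}) = \max_{\myvar{c}_i \geq \myvar{0}} \left[ h_i(\myvar{c}_i) + \myvar{c}_i^\top \big( (I_M \otimes \myvar{l}_i)\myvar{y} + \myvar{b}_i \big) \right],
\]
from which all three claims follow. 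For claim (1), I would invoke Assumption~\ref{ass:convex_f} (convexity of $f_i$) together with Assumption~\ref{ass:local_slater} (Slater's condition for the local problem) and Assumption~\ref{ass:finite_cost}; these guarantee that strong duality holds and that the dual optimum is attained, so the attaining $\myvar{c}_i \geq \myvar{0}$ is the desired multiplier. Its non-uniqueness is expected precisely because $A_i$ need not have full column rank.

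For claim (2), the displayed identity exhibits $\phi_i$, and hence $\phi = \sum_{i\in\myset{I}} \phi_i$, as a pointwise maximum (resp.\ a sum of such) of functions that are affine in $\myvar{y}$, so $\phi$ is convex. To see that $\phi$ is not strongly convex, I would use $L\myvar{1} = \myvar{0}$: replacing $\myvar{y}^m$ by $\myvar{y}^m + \alpha\myvar{1}$ leaves every term $(I_M\otimes\myvar{l}_i)\myvar{y}$ unchanged, since $\myvar{l}_i\myvar{1}=0$, so $\phi$ is constant along these nontrivial directions and therefore cannot be strongly convex.

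Claim (3) is the core. Let $\myvar{c}_i$ attain the maximum in the dual representation at the given $\myvar{y}$. Then for any $\myvar{y}'$, retaining this (generally suboptimal) $\myvar{c}_i$ inside the $\max$ defining $\phi_i(\myvar{y}')$ gives
\[
\phi_i(\myvar{y}') \geq h_i(\myvar{c}_i) + \myvar{c}_i^\top\big((I_M\otimes\myvar{l}_i)\myvar{y}' + \myvar{b}_i\big) = \phi_i(\myvar{y}) + \big((I_M\otimes\myvar{l}_i)^\top\myvar{c}_i\big)^\top(\myvar{y}'-\myvar{y}),
\]
which is exactly the subgradient inequality, so $(I_M\otimes\myvar{l}_i)^\top\myvar{c}_i \in \partial\phi_i(\myvar{y})$. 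Summing over $i\in\myset{I}$ and using $\sum_{i} \partial\phi_i(\myvar{y}) \subseteq \partial\phi(\myvar{y})$ shows $\sum_{i} (I_M\otimes\myvar{l}_i)^\top\myvar{c}_i \in \partial\phi(\myvar{y})$. It then remains a routine computation to identify this vector with $\myvar{\zeta}$: the block $(I_M\otimes\myvar{l}_i)^\top\myvar{c}_i$ stacks $c_i^m\,\myvar{l}_i^\top$ over $m\in\myset{M}$, and since $L$ is symmetric $\myvar{l}_i^\top$ is the $i$th column of $L$, so the $m$th block of the sum equals $\sum_{i} c_i^m\,\myvar{l}_i^\top = L\myvar{c}^m$, whose $i$th entry is $\myvar{l}_i\myvar{c}^m = \sum_{j\in N_i}(c_i^m - c_j^m) = \zeta_i^m$.

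The main obstacle is making the sensitivity step rigorous, namely establishing the dual identity for $\phi_i(\myvar{y})$ with an \emph{attained} dual optimum under the stated assumptions, and correctly threading the multiplier through the affine inner map $\myvar{y}\mapsto (I_M\otimes\myvar{l}_i)\myvar{y}+\myvar{b}_i$. Once the dual representation is secured, convexity and the subgradient formula fall out immediately, and the only remaining care is the bookkeeping that recovers $\myvar{\zeta}$ from the Kronecker structure and the symmetry of the Laplacian $L$.
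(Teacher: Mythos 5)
Your proof is correct, and it reaches all three conclusions by the same overall strategy as the paper---existence of $\myvar{c}_i$ via strong duality under Slater's condition, the per-agent subgradient identity $(I_M\otimes\myvar{l}_i^\top)\myvar{c}_i\in\partial\phi_i(\myvar{y})$, the sum rule $\sum_{i}\partial\phi_i(\myvar{y})\subseteq\partial\phi(\myvar{y})$, and the same Kronecker/Laplacian bookkeeping with $L^\top=L$---but two of the three steps travel a different technical road. For convexity, the paper argues on the primal side: if $\myvar{x}'$, $\myvar{x}''$ are optimal at $\myvar{y}'$, $\myvar{y}''$, then $\lambda\myvar{x}'+(1-\lambda)\myvar{x}''$ is feasible at $\lambda\myvar{y}'+(1-\lambda)\myvar{y}''$ by linearity of the constraints, and convexity of $f_i$ finishes; you instead read convexity off your dual representation of $\phi_i$ as a pointwise maximum of functions affine in $\myvar{y}$. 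For the subgradient claim, the paper invokes the perturbation inequality \cite[Equation 5.57]{boyd2004convex} as a black box (this is exactly \eqref{eq:sensitivity}); you re-derive that inequality from scratch, using strong duality with dual attainment at $\myvar{y}$ for the equality $\phi_i(\myvar{y})=h_i(\myvar{c}_i)+\myvar{c}_i^\top\big((I_M\otimes\myvar{l}_i)\myvar{y}+\myvar{b}_i\big)$ and weak duality at $\myvar{y}'$ for the bound---which is precisely the standard proof of the cited sensitivity result. Your route buys a self-contained argument that makes explicit where dual attainment (hence Slater) enters and unifies all three claims around one identity; the paper's route buys brevity by outsourcing that step. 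Two minor remarks: both your proof and the paper's implicitly require $\phi_i(\myvar{y})$ to be finite (strong duality with an attained multiplier presupposes a finite optimal value, and Assumption~\ref{ass:finite_cost} only concerns the centralized problem), so you are on equal footing there; and your aside attributing non-uniqueness of $\myvar{c}_i$ to the column rank of $A_i$ is loose---uniqueness is governed by LICQ at the optimum, cf.\ Remark~\ref{rem:differentiability}---but nothing in the proposition depends on it.
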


\begin{proof}

 From Assumptions~\ref{ass:convex_f},~\ref{ass:finite_cost}, and~\ref{ass:local_slater},  the strong duality theorem for convex inequality constraints \cite[Proposition 3.5.1]{bertsekas1995nonlinear} applies, and thus, the strong duality holds and at least one Lagrange multiplier exists. This proves Point 1).

We show the convexity of $\phi_i(\myvar{y})$ by definition. For any $\myvar{y}^\prime, \myvar{y}^{\prime\prime} $, denote $\myvar{x}^\prime,\myvar{x}^{\prime\prime}$ the corresponding optimal solution of \eqref{eq:local_QP_phi}. It is clear that $\phi_i(\lambda \myvar{y}^\prime + (1-\lambda) \myvar{y}^{\prime\prime} ) \leq f_i (\lambda \myvar{x}^\prime + (1-\lambda) \myvar{x}^{\prime\prime} ) \leq \lambda f_i(\myvar{x}^\prime) + (1-\lambda)f_i(\myvar{x}^{\prime\prime}) = \lambda \phi_i(\myvar{y}^\prime) + (1-\lambda)\phi_i(\myvar{y}^{\prime\prime})  $, where the inequalities hold thanks to the linearity of the constraints and the convexity of $f_i(\cdot)$, respectively. Thus, $\phi(\myvar{y})$ is also convex. $\phi(y)$ not being strongly convex is evident since $\phi(y + s\mathbf{1}) = \phi(y)$ for any $s\in \mathbb{R}$.

  Now consider a perturbed version of \eqref{eq:local_QP_phi} with $\myvar{y}^\prime$ and denote its optimal value $\phi_i(\myvar{y}^\prime)$. From sensitivity analysis \cite[Equation 5.57]{boyd2004convex}, we know 
 \begin{equation} \label{eq:sensitivity}
      \phi_i(\myvar{y}^\prime)  \geq   \phi_i(\myvar{y}) + \myvar{c}_i^\top ({I}_M \otimes \myvar{l}_i)( \myvar{y}^\prime - \myvar{y})
 \end{equation}
for all $\myvar{y}^\prime$. This means $ ( {I}_M \otimes \myvar{l}_i^\top) \myvar{c}_i$ is a subgradient of $\phi_i(\myvar{y})$. Since $\phi(\myvar{y})$  is convex,  $\partial \phi$ is non-empty and takes compact and convex values \cite[Proposition 9]{cortes2008discontinuous}. Based on the calculus of subdifferentials: $\partial(f_1 + f_2 )(\myvar{x}) \supseteq \partial f_1(\myvar{x}) + \partial f_2(\myvar{x})$, we know 
\begin{equation} 
    \sum_{i\in \myset{I}}  ( {I}_M \otimes \myvar{l}_i^\top) \myvar{c}_i \in \partial \phi(\myvar{y}).
\end{equation}
Note that
$$ ( {I}_M \otimes \myvar{l}_i^\top) \myvar{c}_i = \begin{bmatrix}
    \myvar{l}_i^\top & 0 &  & 0 \\
    0 & \myvar{l}_i^\top &  & 0 \\
    0 & 0 & \ddots & 0 \\
    0 & 0 &  & \myvar{l}_i^\top 
\end{bmatrix} \begin{bmatrix}
    c_i^1\\
    c_i^2 \\
    ...\\
    c_i^M
\end{bmatrix} =  \begin{bmatrix}
    c_i^1 \myvar{l}_i^\top \\
    c_i^2  \myvar{l}_i^\top\\
    ...\\
    c_i^M \myvar{l}_i^\top
\end{bmatrix}
$$
and that 
$$\sum_{i\in \myset{I}} c_i^m \myvar{l}_i^\top = \begin{bmatrix}
    \myvar{l}_1^\top & \myvar{l}_2^\top & ... & \myvar{l}_N^\top
    \end{bmatrix} \myvar{c}^m =  L^\top \myvar{c}^m = L \myvar{c}^m $$
 with $\myvar{c}^m = (c_1^m, c_2^m, ..., c_N^m)$. We thus obtain
\begin{multline}  \label{eq:global_subgradient} 
    \sum_{i\in \myset{I}}  ( {I}_M \otimes \myvar{l}_i^\top) \myvar{c}_i  = [
    L \myvar{c}^1; L \myvar{c}^2;  ...; L \myvar{c}^M
    ]   \\ = ( {I}_M \otimes L) \myvar{c}_C \in  \partial \phi(\myvar{y}),
\end{multline}  
where $\myvar{c}_C =[\myvar{c}^1; \myvar{c}^2; ...; \myvar{c}^M] $ (ordered by constraints). Equivalently, we have $\myvar{\zeta} \in \partial \phi(\myvar{y})$. This proves Point 3).
\end{proof}

\begin{remark}
    Recall that $\myvar{c}_i$ can be calculated solely based on the local problem at node $i$. Proposition~\ref{prop:subdifferential} reveals that although each computational node does not have the full information about the subgradient of $\phi(\myvar{y})$, each computational node has access to the subgradient of $\phi(\myvar{y})$ with respect to the local auxiliary variable $\myvar{y}_i$ by communicating local Lagrange multipliers to its neighbors. Specifically, we have that
\begin{equation}
    \frac{\partial \phi(\myvar{y})}{\partial y_i^m} = \zeta_i^m = \sum_{j\in N_i}(c_i^m - c_j^m), \forall m\in \myset{M}
\end{equation}
whenever $\phi(\myvar{y})$ is differentiable.
\end{remark}

\section{Proposed algorithm}\label{sec:proposed_algorithm}

In this section, we give details about the proposed continuous-time distributed optimization algorithm. A pseudo-code for computational node $i$ is given in Algorithm~\ref{alg:distributed_optimization}. Note that since the analysis is done in continuous time, we are assuming that the four steps in the loop block are executed at the same time instant, i.e.,  $\myvar{y}_i, \myvar{y}_j, \myvar{c}_i, \myvar{c}_j$ in Steps 4 and 6 are computed and transmitted simultaneously. For digital implementation, all agents synchronously communicate $\myvar{y}_i$ and $\myvar{c}_i$ in Steps 4 and 6. We also choose a small sampling time to approximate the continuous-time update of $\myvar{y}_i$ in Step 7.

\begin{algorithm} 
\caption{Distributed Optimization Algorithm}
\label{alg:distributed_optimization}
\begin{algorithmic}[1]
\State \textbf{Locally stored state:}  $\myvar{x}_i, \myvar{y}_i$
\State  \textbf{Initialization:}  choose arbitrary $\myvar{y}_i$.
\Loop
\State Send $\myvar{y}_i$ to and gather $\myvar{y}_j$ from neighbors $j\in N_i$
\State Compute $(\myvar{x}_i, \myvar{c}_i)$ as the primal-dual optimizer of \eqref{eq:local_problem}
\State Send $ \myvar{c}_i$ to and gather $\myvar{c}_j$ from  neighbors $j\in N_i$
\State Update $\myvar{y}_i$ according to \eqref{eq:adaptive law}
\EndLoop
\end{algorithmic}
\end{algorithm}

\begin{thm} \label{thm:convergence and constraints}
    Let Assumptions~\ref{ass:convex_f},~\ref{ass:finite_cost} and~\ref{ass:local_slater} hold.   Denote by $\myvar{c}_i(\myvar{y})$  the Lagrange multiplier of  \eqref{eq:local_problem}  given $\myvar{y}$. If $\myvar{y}_i$ is updated according to 
\begin{equation} \label{eq:adaptive law}
    \dot{\myvar{y}}_i = - k_0 \sum_{j\in N_i} (\myvar{c}_i(\myvar{y}) - \myvar{c}_j(\myvar{y})) \text{ for almost all } t,
\end{equation}
where $k_0>0$ is a constant gain and a Caratheodory solution\footnote{A Caratheodory solution of a differential equation $\dot{\myvar{x}}(t)= X(\myvar{x}(t))$ or a differential inclusion $\dot{\myvar{x}}(t) \in X(\myvar{x}(t)) $ is an absolutely continuous function $\myvar{x}(t):[0,\infty) \to \mathbb{R}^n$  that satisfies the differential equation or the differential inclusion for almost all $t$, respectively \cite{cortes2008discontinuous}. } exists, then 
\begin{enumerate}
    \item the Caratheodory solution is unique and converges to the set of optimal $\myvar{y}$ of the equivalent problem \eqref{eq:reformulated_problem},
    \item the solution to the local problem \eqref{eq:local_problem}  asymptotically converges to (one of) the optimal solution(s) of the centralized problem in \eqref{eq:centralized_problem},
    \item and the coupling constraints in \eqref{eq:centralized_problem} hold during the solution evolution.
\end{enumerate}
\end{thm}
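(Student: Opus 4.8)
The plan is to recognize the update law \eqref{eq:adaptive law} as a continuous-time subgradient flow of the convex function $\phi$ from \eqref{eq:phi_y}, and then to dispatch the three claims in the order (3), uniqueness, (1), (2), since each step feeds the next. Stacking \eqref{eq:adaptive law} over all agents and using the computation already carried out in the proof of Proposition~\ref{prop:subdifferential}, the dynamics reads $\dot{\myvar{y}} = -k_0 (I_M \otimes L)\myvar{c}_C = -k_0 \myvar{\zeta}$ with $\myvar{\zeta} \in \partial\phi(\myvar{y})$ for almost all $t$. Thus the entire argument reduces to the classical analysis of a negative-subgradient flow of a convex function, specialized to our distributed construction.

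Claim (3) is immediate and independent of the dynamics: at every time $t$, each $\myvar{x}_i(t)$ is computed as an optimizer, hence a feasible point, of the local problem \eqref{eq:local_problem}, so Proposition~\ref{prop:constraint_satisfaction} guarantees that the stacked vector satisfies every coupling constraint of \eqref{eq:centralized_problem} for all $t$. For uniqueness I would exploit the monotonicity of the subdifferential of a convex function: given two Caratheodory solutions $\myvar{y}_1, \myvar{y}_2$ with $\myvar{y}_1(0) = \myvar{y}_2(0)$ and associated selections $\myvar{\zeta}_1 \in \partial\phi(\myvar{y}_1)$, $\myvar{\zeta}_2 \in \partial\phi(\myvar{y}_2)$, the map $t \mapsto \tfrac12\|\myvar{y}_1(t) - \myvar{y}_2(t)\|^2$ is absolutely continuous with a.e. derivative $-k_0(\myvar{\zeta}_1 - \myvar{\zeta}_2)^\top(\myvar{y}_1 - \myvar{y}_2) \leq 0$, because $(\myvar{\zeta}_1 - \myvar{\zeta}_2)^\top(\myvar{y}_1 - \myvar{y}_2) \geq 0$ for any pair of subgradients of a convex function. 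Hence the squared distance stays at zero and $\myvar{y}_1 \equiv \myvar{y}_2$, regardless of which selection each solution happens to use.

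For the convergence part of claim (1), I would fix a minimizer $\myvar{y}^\star$ of $\phi$ (whose existence is tied to the finiteness of the optimal value in Assumption~\ref{ass:finite_cost} together with the equivalence in Proposition~\ref{prop:equivalent_problem}) and take the Lyapunov candidate $V(\myvar{y}) = \tfrac12\|\myvar{y} - \myvar{y}^\star\|^2$. Along the flow, the subgradient inequality $\phi(\myvar{y}^\star) \geq \phi(\myvar{y}) + \myvar{\zeta}^\top(\myvar{y}^\star - \myvar{y})$ gives $\dot V = -k_0\,\myvar{\zeta}^\top(\myvar{y} - \myvar{y}^\star) \leq -k_0\big(\phi(\myvar{y}) - \phi(\myvar{y}^\star)\big) \leq 0$. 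This yields boundedness of the trajectory and, on integrating, $\int_0^\infty \big(\phi(\myvar{y}(t)) - \phi(\myvar{y}^\star)\big)\,dt < \infty$; since $t \mapsto \phi(\myvar{y}(t))$ is nonincreasing (its a.e. derivative equals $-k_0\|\myvar{\zeta}\|^2 \leq 0$), it must converge to $\phi(\myvar{y}^\star)$, so by continuity of $\phi$ every limit point of the bounded trajectory minimizes $\phi$ and $\myvar{y}(t)$ approaches the optimal set.

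Finally, claim (2) I would obtain from claims (1) and (3) through the cost. By construction $\sum_{i\in\myset{I}} f_i(\myvar{x}_i(t)) = \phi(\myvar{y}(t))$, which converges to the optimal value of \eqref{eq:reformulated_problem}, equal to that of \eqref{eq:centralized_problem} by Proposition~\ref{prop:equivalent_problem}. Since $\myvar{x}(t)$ remains feasible for all $t$ by claim (3), any limit point is feasible with optimal cost, hence an optimizer of \eqref{eq:centralized_problem}. The step I expect to be the main obstacle is making the nonsmooth Lyapunov bookkeeping rigorous: justifying the chain rule for $V(\myvar{y}(t))$ and for $\phi(\myvar{y}(t))$ along a Caratheodory solution whose right-hand side is a discontinuous subgradient selection, and invoking a nonsmooth LaSalle-type invariance argument (e.g.\ via the set-valued Lie derivative of \cite{cortes2008discontinuous}) to upgrade ``the cost converges'' into ``the trajectory converges to the optimal set.'' The monotonicity-based uniqueness and the feasibility claim are, by contrast, essentially calculation-free.
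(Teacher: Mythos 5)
Your proposal follows a genuinely different route from the paper on both non-trivial claims, and it is essentially correct up to one point you must patch. For uniqueness, the paper simply invokes the result on gradient differential inclusions of convex functions from \cite{cortes2008discontinuous}, whereas you re-derive it from monotonicity of $\partial\phi$; your argument is precisely the mechanism behind the cited result, so it is self-contained and equally valid. For convergence, the paper works with the set-valued lower Lie derivative $\underline{L}_{-\partial k_0 \phi}\phi$, shows its supremum $h(t)$ is nonpositive, and runs a contradiction argument on $\limsup_{t\to\infty} h(t)$ using Assumption~\ref{ass:finite_cost}; this yields only convergence of a \emph{subsequence} of $\myvar{y}(t)$ to $\{\myvar{y}: \myvar{0}\in\partial\phi(\myvar{y})\}$ (a limitation the paper itself acknowledges in Remark~\ref{rem:convergence}, since the optimal set may be unbounded and LaSalle-type arguments fail). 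Your distance-to-minimizer argument --- $V=\tfrac12\|\myvar{y}-\myvar{y}^\star\|^2$, the inequality $\dot V \le -k_0\left(\phi(\myvar{y})-\phi(\myvar{y}^\star)\right)$, and the integral bound --- buys strictly more: boundedness of the trajectory and convergence of the \emph{full} trajectory to the optimal set. The chain-rule bookkeeping you flag as the main obstacle is not a real one: $V$ is smooth, so its derivative along an absolutely continuous solution is immediate, and for $\phi(\myvar{y}(t))$ the standard chain rule for convex functions along absolutely continuous curves gives $\frac{d}{dt}\phi(\myvar{y}(t)) = -k_0\|\myvar{\zeta}\|^2$ a.e., which is exactly what the paper's Lie-derivative machinery encodes.

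The genuine gap is the existence of the minimizer $\myvar{y}^\star$, on which your entire convergence argument hinges and which you justify only by gesturing at Assumption~\ref{ass:finite_cost} and Proposition~\ref{prop:equivalent_problem}. Assumption~\ref{ass:finite_cost} says the optimal \emph{value} of \eqref{eq:centralized_problem} is finite; finiteness of an infimum does not give attainment (e.g., $\min e^{-x}$ subject to $x\ge 0$), and Proposition~\ref{prop:equivalent_problem} transfers feasible points, not attainment. The paper's contradiction argument needs only lower-boundedness of $\phi$, so it does not face this issue at the same step. The clean patch: assume, as Point~2 of the theorem implicitly does, that \eqref{eq:centralized_problem} admits an optimizer $\myvar{x}^\star$ with value $f^\star$; by Proposition~\ref{prop:equivalent_problem}(2) there exists $\myvar{y}^\star$ such that $(\myvar{x}^\star,\myvar{y}^\star)$ is feasible for \eqref{eq:reformulated_problem}, hence each $\myvar{x}_i^\star$ is feasible for the local problem \eqref{eq:local_problem} at $\myvar{y}^\star$ and $\phi(\myvar{y}^\star)\le \sum_i f_i(\myvar{x}_i^\star)=f^\star$; conversely, Proposition~\ref{prop:constraint_satisfaction} gives $\phi(\myvar{y})\ge f^\star$ for every $\myvar{y}$, so $\myvar{y}^\star$ minimizes $\phi$. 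With that inserted, your proof is complete (and your Claim~(2) step inherits the same looseness as the paper's: passing from cost convergence plus feasibility to convergence of $\myvar{x}(t)$ itself tacitly requires limit points of $\myvar{x}(t)$ to exist).
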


\begin{proof}
From Proposition~\ref{prop:subdifferential}, $\myvar{c}_i$ is well-defined for all $\myvar{y}$, but it is not necessarily unique. From the right-hand side of \eqref{eq:adaptive law}, one derives
    \begin{equation}
         \dot{\myvar{y}}_i = -k_0\begin{bmatrix}
    \sum_{j\in N_i} (c^1_i - c^1_j) \\
    \sum_{j\in N_i} (c^2_i - c^2_j) \\
    ... \\
    \sum_{j\in N_i} (c^M_i - c^M_j)
    \end{bmatrix} = - k_0\begin{bmatrix}
    l_i \myvar{c}^1 \\
     l_i \myvar{c}^2 \\
    ... \\
     l_i \myvar{c}^M
    \end{bmatrix}
    \end{equation}
    for almost all $t$. Re-order  $\{ -k_0 l_i \myvar{c}^m \}_{i\in \myset{I}, m \in \myset{M}}$  by the order of constraints, we obtain $\dot{\myvar{y}} = \begin{bmatrix}
       \dot{\myvar{y}}^1; \dot{\myvar{y}}^2; ...; \dot{\myvar{y}}^M 
    \end{bmatrix}$ where $$ \dot{\myvar{y}}^m =  - k_0[
    l_1 \myvar{c}^m;  \\    l_2 \myvar{c}^m; ...;
     l_N \myvar{c}^m
    ] = - k_0  L\myvar{c}^m.$$     Thus, $\dot{\myvar{y}} =  - k_0  (I_M \otimes L)\myvar{c}_C$ for almost all $t$, where we recall $\myvar{c}_C =[\myvar{c}^1; \myvar{c}^2; ...; \myvar{c}^M] $ (ordered by constraints). Denote the  Caratheodory solution  of this differential equation to be $\myvar{y}(t)$.

  Now consider Caratheodory solutions of the differential inclusion $\dot{\myvar{y}}  \in  - k_0 \partial \phi(\myvar{y}) = -  \partial k_0 \phi(\myvar{y}) $  in view of Proposition~\ref{prop:subdifferential},  where $\phi(\myvar{y})$ is defined in \eqref{eq:phi_y}. From \cite[Gradient Differential Inclusion of a Convex Function]{cortes2008discontinuous}, we know such a solution exists and is unique. Recall that  $\dot{\myvar{y}}(t)\in -\partial k_0 \phi(\myvar{y}(t))$ for almost all $t$, thus  $\myvar{y}(t)$ is  also unique.

  In order to investigate how $\phi(\myvar{y}(t))$ evolves with respect to time, we check the  set-valued lower Lie derivative along the differential inclusion  \begin{multline}
       \underline{L}_{-\partial k_0 \phi(\myvar{y})} \phi(\myvar{y})  = \{ a\in \mathbb{R}:  \text{there exists } \myvar{\xi}\in \partial \phi(\myvar{y})  \\\text{ such that }   a = \min_{\myvar{v}\in -\partial k_0 \phi(\myvar{y})} \myvar{\xi}^\top \myvar{v} \}
  \end{multline}

For any $\myvar{\xi}\in \partial \phi(\myvar{y}) $, there exists $\myvar{v} = 
 -k_0 \myvar{\xi} \in -\partial k_0 \phi(\myvar{y})$ such that $\myvar{\xi}^\top \myvar{v}= -k_0 \| \myvar{\xi}  \|^2\leq 0$. Hence $\sup  \underline{L}_{-\partial k_0 \phi(\myvar{y})} \phi(\myvar{y}) \leq 0$ for all $\myvar{y}$, and the equality holds if and only if $\myvar{0}\in \partial \phi(\myvar{y})$. Following \cite[Propositions 13]{cortes2008discontinuous} and the uniqueness of the solutions, we know $\phi(\myvar{y}(t))$ is nonincreasing. In particular, when $\myvar{0}\notin \partial \phi(\myvar{y})$, $\sup  \underline{L}_{-\partial k_0 \phi(\myvar{y})} \phi(\myvar{y}) < 0$, and thus $\phi(\myvar{y}(t))$ is strictly decreasing.

 Denote $h(t) = \sup  \underline{L}_{-\partial k_0 \phi(\myvar{y}(t))} \phi(\myvar{y}(t))$, $\alpha = \lim\sup_{t\to \infty} h(t)$. Such a limit exists and is bounded since $h(t)$ is upper bounded by $0$. Assume $\alpha <0$, then, $\exists T>0 $ and $\epsilon = -\alpha /2$ such that $h(t)< \alpha  + \epsilon = \alpha /2<0$ for all $t\geq T$. This implies that $\phi(\myvar{y}(t)) < \phi(\myvar{y}(T)) + \alpha (t-T)/2 $ for all $t\geq T$, which contradicts with Assumption~\ref{ass:finite_cost}. Thus, we have $\limsup_{t\to \infty} h(t) = 0$. This means that a subsequence of $h(t)$ converges to $0$ asymptotically, which leads to the conclusion that a subsequence of $\myvar{y}(t)$ converges to the set $ \{\myvar{y}:  \myvar{0} \in \partial\phi(\myvar{y}) \}$. This proves Point 1).
 
 Since $\myvar{y}(t)$ converges to the optimal set $\{\myvar{y}:  \myvar{0} \in \partial\phi(\myvar{y}) \}$ asymptotically, and in view of the equivalence between \eqref{eq:centralized_problem} and \eqref{eq:reformulated_problem} from Proposition~\ref{prop:equivalent_problem}, we know the optimal solution to \eqref{eq:centralized_problem} is obtained asymptotically. This shows Point 2). Point 3) follows from Assumption~\ref{ass:local_slater} and Proposition~\ref{prop:constraint_satisfaction}. 
 \end{proof}

\begin{remark}\label{rem:convergence}
Convergence of the subgradient differential inclusion of a convex function was investigated in  \cite{clarke2008nonsmooth} and \cite[Gradient Differential Inclusion of a Convex Function]{cortes2008discontinuous}, where the convergence property has been established. However, we note that here $\phi(\myvar{y})$ has infinitely many local minima and they may not be bounded. Proofs in \cite{clarke2008nonsmooth} and \cite{cortes2008discontinuous} rely on nonsmooth Lyapunov function analysis (with an assumption on the uniqueness of the local minima)  or LaSalle's principle (with an assumption on the boundedness of the local minima), both of which are not applicable here. Instead, we did not prove the convergence of $\myvar{y}(t)$ to a particular optimal point, but to an optimal set $\{\myvar{y}:  \myvar{0} \in \partial\phi(\myvar{y}) \}$.
\end{remark}

\begin{remark}[Caratheodory solution]
One may wonder why a Caratheodory solution is considered in Theorem~\ref{thm:convergence and constraints} and how to guarantee/verify its existence. In Proposition~\ref{prop:subdifferential}, we have established that, for a given $\myvar{y}$, the Lagrange multiplier $c_i$ always exists but may not be unique under  Assumptions~\ref{ass:convex_f},~\ref{ass:finite_cost}, and~\ref{ass:local_slater}. If  $c_i(\myvar{y})$, as a function of $\myvar{y}$, is continuous, then classic solutions to the ODE in~\eqref{eq:adaptive law} exist, and of course a Caratheodory solution exists. We introduce the Caratheodory solution so that we can neglect discontinuities of $c_i(\myvar{y}(t))$ for a duration of measure zero. Theoretically,  for the local optimization problem~\eqref{eq:local_QP_phi} parameterized by $\myvar{y}$, when the cost function is smooth and the Strong Second Order Sufficient Condition (SSOSC)  holds at $\myvar{y}^\prime$, there exists a neighborhood of $\myvar{y}^\prime$ such that a unique primal-dual optimizer $(\myvar{x}_i(\myvar{y}), \myvar{c}_i(\myvar{y}) )$, when viewed as a function of $\myvar{y}$, exists and is locally Lipschitz continuous in $\myvar{y}$ \cite{robinson1980strongly}. Other conditions exist for ensuring continuous primal-dual pairs of perturbed optimization problems. See a recent survey \cite{mestres2023robinson}  and references therein. Numerically, if there are multiple eligible local Lagrange multipliers, one can choose $\myvar{c}_i $ that is close to $\myvar{c}_i^{-}$, where $\myvar{c}_i^{-}$ denotes the variable $\myvar{c}_i$ calculated one step before.

\end{remark}

\begin{remark}[Differentiability] \label{rem:differentiability}
     \cite[Corollary 7.3.1]{florenzano2001finite} states that the perturbed optimal value function is locally differentiable if and only if the Lagrange multiplier is unique. Thus, one sufficient condition for local differentiability of $\phi_i(\myvar{y})$ is  that the optimization problem in \eqref{eq:local_QP_phi} fulfills the Linearly Independent Constraint Qualification (LICQ), which ensures the existence and uniqueness of the Lagrange multiplier \cite{wachsmuth2013licq}. When $\phi_i(\myvar{y})$'s are differentiable, the subgradient then becomes the usual gradient, i.e., $\partial \phi_i(\myvar{y}) = \{ \nabla \phi_i(\myvar{y})\}$. In this case, the adaptive law in \eqref{eq:adaptive law} becomes the well-established gradient flow. This will be discussed in detail for one special case, namely the control barrier function-induced quadratic programs, in Section~\ref{sec:qp_case}.

\end{remark}

\begin{remark}[Memory, computation, and communication efficiency]

In Algorithm~\ref{alg:distributed_optimization}, each computational node $i$ stores variables $\myvar{x}_i$ and $\myvar{y}_i$ of dimension $d_i + M$, while broadcasting $\myvar{y}_i$ and $\myvar{c}_i$ of dimension $2M$ to neighbors. Compared to the primal decomposition scheme in \cite{notarnicola2019constraint}, our approach requires smaller storage ($d_i + M$ versus  $d_i + D_i M$) and communication ($2M$ versus  $2 D_i M$) variables, where $D_i$ is the degree of the communication graph at node $i$ \cite{mesbahi2010graph}. \cite{mestres2023distributed} adopts the same decomposition scheme as in this work, but requires each local node to store $2d_i + 2M$ variables. Additionally, the proposed algorithm requires less computation for local nodes at each iteration, as they only need to solve a single optimization problem with the size of decision variable $d_i$, unlike the dual decomposition scheme in \cite{falsone2017dual}, which involves solving two optimizations with sizes $d_i$ and $M$.

\end{remark}

\section{A sparse case}\label{sec:sparse_case}

What has been discussed so far deals with the most general case, and is applicable in a densely coupled scenario, i.e.,  every computational node is involved in every coupling constraint. In many cases, the constraint coupling is sparse and, usually, is consistent with the communication topology. For example, in multi-robot applications \cite{Wang2017a,fernandez2023distributed,mestres2024distributed}, collision avoidance is a pair-wise constraint between two robots, and they need to have some information exchange between them in order to work out a safe path. Exploiting the sparsity of the constraint coupling would help us relax conservative conditions,  reduce the size of optimization problems on each computational node, and improve communication efficiency with the neighbors. To formally characterize sparsity, we first define the following notations. 

Define $\myset{I}_m \subseteq \myset{I}$ as the index set of the computational nodes involved in the $m$-th constraint, i.e., $i\in \myset{I}_m$ if and only if $\myvar{a}_i^m \neq \myvar{0} \ $ or $ \  b_i^m \neq 0 $. Similarly, define $\myset{M}_i \subseteq \myset{M}$ as the index set of the coupling constraints involving node $i$, i.e., $m\in \myset{M}_i$ if and only if $\myvar{a}_i^m \neq \myvar{0} \ $ or $\  b_i^m \neq 0 $. Define the set of neighboring nodes of node $i$ involved in the $m$-th constraint as $N_i^m = \{j\in \myset{I}:   j\in N_i \ $ and $ \  j\in \myset{I}_m  \}$. In the sparse setting, we assume that each agent knows which neighbors are involved in which relevant constraints, i.e., $N_i^m, \forall m\in \myset{M}_i$ is known to agent $i$.

We call the communication graph $\mathcal{G} = (\myset{I}, E)$ is \textit{consistent} with the $m$-th constraint if the induced subgraph \cite{mesbahi2010graph} $\mathcal{G}_m :=(\myset{I}_m, E_m)$ is connected, where $(i,j)\in E_m $ if and only if  $i,j\in \myset{I}_m$ and $(i,j)\in E $. Define $\mathcal{G}_m^{\prime} :=(\myset{I}, E_m)$.

\begin{assumption} \label{ass:sparsity}
        The communication graph $\mathcal{G}$ is consistent with all constraints.
\end{assumption}
  Figure~\ref{fig:illustrate_sparsity} illustrates a scenario where there are  4 computational nodes and 2 coupling constraints.  Note that both $\mathcal{G}_1 $ (the subgraph in pink) and $\mathcal{G}_2$ (the subgraph in blue) are connected, then $\mathcal{G}$ is consistent with these two constraints. 

\begin{figure}
    \centering
    \includegraphics[width= 0.4\linewidth]{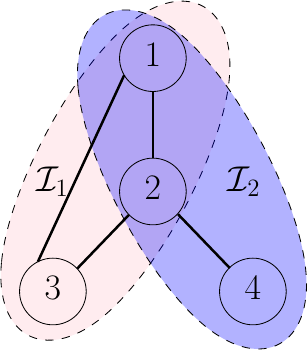}
    \caption{An illustration of a communication graph satisfying Assumption~\ref{ass:sparsity}. In this scenario, the agent index sets for each constraint are $\myset{I}_1 =\{1,2,3\}, \myset{I}_2 =\{1,2,4\} $, the constraint index sets for each agent are $\myset{M}_1 = \myset{M}_2 = \{1,2\}$, $\myset{M}_3 = \{1\}, \myset{M}_4 = \{2\}$, and the sets of neighboring nodes with relevant constraint involvement are $N_1^1 = \{ 2,3\}, N_1^2 = \{ 2\}, N_2^1 = \{ 1,3\}, N_2^2 = \{ 1,4\}, N_3^1 = \{ 1,2\}, N_4^2 = \{ 2\} $, respectively. }
    \label{fig:illustrate_sparsity}
\end{figure}
 Under Assumption~\ref{ass:sparsity},  we know at most $|\myset{M}_i|$ columns of $A_i $ are nonzero. One sufficient condition that fulfills Assumption~\ref{ass:local_slater} in this case is thus
     $\textup{Rank}(A_i) = |\myset{M}_i|$ for all $i$.

\subsection{Equivalent optimization problems}
Now we show that the original problem in \eqref{eq:centralized_problem} is equivalent to the following optimization problems in a sparse setting.
\begin{equation} \label{eq:reformulated_problem_sparsity}
    \begin{aligned}
    &    \hspace{2cm} \min_{\myvar{x}_i, \myvar{y}_i, i\in \myset{I}}  \sum_{i\in \myset{I}} f_i(\myvar{x}_i)  \\
    &  s.t. \ \myvar{a}_i^{m \top} \myvar{x}_i  + \sum_{j\in N_i^m} (y_i^m - y_j^{m}) + b_i^m \leq 0, \\
    & \hspace{4cm} \forall i \in \myset{I}, m\in \myset{M}_i.
    \end{aligned} 
\end{equation}

 \begin{proposition}[Equivalence in a sparse setting]
    Under Assumptions~\ref{ass:local_slater} and ~\ref{ass:sparsity}, the two problems in \eqref{eq:centralized_problem} and \eqref{eq:reformulated_problem_sparsity} are equivalent, in the sense that
    \begin{enumerate}
        \item for any feasible solution   $(\myvar{x}^{\prime}, \myvar{y}^{\prime}  )$ to \eqref{eq:reformulated_problem_sparsity}, $\myvar{x}^{\prime}$ is a feasible solution to \eqref{eq:centralized_problem};
        \item for any feasible solution $\myvar{x}^{\prime}$ to \eqref{eq:centralized_problem}, there exists a variable  $\myvar{y}^{\prime} $ such that $(\myvar{x}^{\prime}, \myvar{y}^{\prime}  )$ is a feasible solution to \eqref{eq:reformulated_problem_sparsity};
        \item the two problems have the same cost function.
    \end{enumerate}
\end{proposition}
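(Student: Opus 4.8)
The plan is to establish the three claims by mirroring the argument of Proposition~\ref{prop:equivalent_problem}, but working constraint-by-constraint on the induced subgraphs $\mathcal{G}_m$ rather than on the full graph $\mathcal{G}$. Claim 3 is immediate, since both \eqref{eq:centralized_problem} and \eqref{eq:reformulated_problem_sparsity} carry the identical objective $\sum_{i\in\myset{I}} f_i(\myvar{x}_i)$ with no constraint acting on the cost itself, so I would dispose of it in one line.

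For Claim 1, I would fix a constraint index $m\in\myset{M}$ and sum the corresponding local constraint of \eqref{eq:reformulated_problem_sparsity} over all $i\in\myset{I}_m$. The decisive observation is that the coupling term $\sum_{i\in\myset{I}_m}\sum_{j\in N_i^m}(y_i^m - y_j^m)$ vanishes: since $N_i^m$ encodes precisely the edge set $E_m$, each ordered pair with $(i,j)\in E_m$ contributes $y_i^m - y_j^m$ while the reverse pair contributes $y_j^m - y_i^m$, so the total equals $\myvar{1}^\top L_m \myvar{y}^m = 0$, where $L_m$ denotes the Laplacian of $\mathcal{G}_m = (\myset{I}_m, E_m)$. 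What remains is $\sum_{i\in\myset{I}_m}(\myvar{a}_i^{m\top}\myvar{x}_i + b_i^m)\leq 0$; because $\myvar{a}_i^m = \myvar{0}$ and $b_i^m = 0$ for $i\notin\myset{I}_m$ by the very definition of $\myset{I}_m$, this is exactly the $m$-th constraint of \eqref{eq:centralized_problem}. Ranging over $m$ gives feasibility of $\myvar{x}^\prime$.

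For Claim 2, given a feasible $\myvar{x}^\prime$ of \eqref{eq:centralized_problem}, I would construct the auxiliary variable separately for each $m$. Writing $g_i^m := \myvar{a}_i^{m\top}\myvar{x}_i^\prime + b_i^m$ for $i\in\myset{I}_m$ and $\bar{g}^m := \frac{1}{|\myset{I}_m|}\sum_{i\in\myset{I}_m} g_i^m$, the original $m$-th constraint yields $\bar{g}^m \leq 0$. Here Assumption~\ref{ass:sparsity} is the crux: because $\mathcal{G}_m$ is connected, $L_m$ has range exactly $\{\myvar{v}: \myvar{1}^\top\myvar{v} = 0\}$, and the zero-mean vector $\bar{g}^m\myvar{1} - \myvar{g}^m$ lies in that range. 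Hence there exists $\myvar{y}^m$ on the nodes $\myset{I}_m$ solving $L_m\myvar{y}^m = \bar{g}^m\myvar{1} - \myvar{g}^m$, so that for every $i\in\myset{I}_m$ one gets $g_i^m + (L_m\myvar{y}^m)_i = \bar{g}^m \leq 0$. Recognizing $(L_m\myvar{y}^m)_i = \sum_{j\in N_i^m}(y_i^m - y_j^m)$ shows each local constraint of \eqref{eq:reformulated_problem_sparsity} holds. Collecting these across $m$, and assigning arbitrary values (e.g.\ zero) to any $y_i^m$ with $i\notin\myset{I}_m$ since such variables appear in no constraint, produces the required $\myvar{y}^\prime$.

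The main obstacle is the backward direction in Claim 2, and specifically its reliance on connectedness of each $\mathcal{G}_m$: without Assumption~\ref{ass:sparsity} the range of the subgraph Laplacian would be strictly smaller than the zero-mean subspace and the solvability step could fail. I would therefore isolate the range characterization of $L_m$ as the key ingredient (standard for connected graphs) and verify carefully that the operator appearing in \eqref{eq:reformulated_problem_sparsity} is indeed $L_m$ via $N_i^m \leftrightarrow E_m$; the remaining algebra is routine.
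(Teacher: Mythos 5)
Your proof is correct and follows exactly the route the paper intends: it mirrors the proof of Proposition~\ref{prop:equivalent_problem} (telescoping/kernel argument for Claim 1, range characterization of the Laplacian for Claim 2), replacing the full graph Laplacian $L$ by the Laplacians of the induced subgraphs $\mathcal{G}_m$, whose connectedness under Assumption~\ref{ass:sparsity} is precisely what makes the solvability step go through. In fact, you supply the details (the $N_i^m \leftrightarrow E_m$ identification, the vanishing of terms with $i\notin\myset{I}_m$, and the zero-padding of $y_i^m$ for $m\notin\myset{M}_i$) that the paper omits for brevity.
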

The proof follows similar steps as in that of Proposition~\ref{prop:equivalent_problem} except that now we need to use the Laplacian matrices of the induced subgraphs $\mathcal{G}_m, m\in \myset{M}$, which are assumed to be connected under Assumption~\ref{ass:sparsity}. Details are omitted here for brevity. Similar to Section~\ref{sec:equiv_program}, the reformulated problem in \eqref{eq:reformulated_problem_sparsity} has a separable structure, and  only local information $y_j^m, j\in N_i^m$ is needed to determine the constraints concerning each node.

 \begin{figure*}[!t]
    \normalsize
    \setcounter{MYtempeqncnt}{\value{equation}}
    \setcounter{equation}{\value{MYtempeqncnt}}
    \begin{equation} \label{eq:sparse_sensitivity}
    \begin{aligned}
    \phi_i(\myvar{y}^\prime) &\geq \phi_i(\myvar{y}) + {\textstyle \sum}_{m\in \myset{M}_i} c_i^m {\textstyle \sum}_{j\in N_i^m} \left((y_i^{\prime,m} - y_j^{\prime,m}) - ( y_i^m  - y_j^{m})\right)  \\
        & = \phi_i(\myvar{y}) +{\textstyle \sum}_{m\in \myset{M}_i} c_i^m \tilde{\myvar{l}}_{i,\mathcal{G}_m}   ( \myvar{y}^{\prime, m} - \myvar{y}^m)  \\
        & \hspace{4.5cm}+ {\textstyle \sum}_{m\notin \myset{M}_i} c_i^m \tilde{\myvar{l}}_{i,\mathcal{G}_m}   ( \myvar{y}^{\prime, m} - \myvar{y}^m) \\
        & = \phi_i(\myvar{y}) + \myvar{c}_i^{\top} \tilde{L}_i    ( \myvar{y}^{\prime} - \myvar{y})
    \end{aligned}, \text{ where }  \tilde{L}_i  := \begin{bmatrix}
    \tilde{\myvar{l}}_{i,\mathcal{G}_1} & 0 & ... & 0 \\
    0 & \tilde{\myvar{l}}_{i,\mathcal{G}_2} & ... & 0 \\
    0 & 0 & ... & 0 \\
    0 & 0 & ... & \tilde{\myvar{l}}_{i,\mathcal{G}_M} 
    \end{bmatrix}
    \end{equation}
   
    \setcounter{equation}{\value{MYtempeqncnt}+1}
    \hrulefill
    \vspace*{4pt}
    \end{figure*}

It is obvious from \eqref{eq:reformulated_problem_sparsity} that  $y_i^m, m\in \myset{M} \setminus \myset{M}_i$ is not involved in the local optimization problem. Thus,  we can pre-set those components in $\myvar{y}$ to be constant zero. The sparse formulation in \eqref{eq:reformulated_problem_sparsity} also accounts for local constraints that involve only one node. Denoting the node by $i$, we have $a_j^m, b_j^m = 0$ for any $j\neq i$ and we pre-set $y_k^m = 0$ for any $k\in \mathcal{I}$.

\subsection{local optimization problems}
Following a similar analysis procedure, in the following we will treat $\myvar{y}_i$ as an independent variable and design a distributed updating law so that $\myvar{y}_i$ converges to the optimal $\myvar{y}_i^\star$ to \eqref{eq:reformulated_problem_sparsity}. For node $i$ and a given $\myvar{y} $, 
 a local optimization problem is given by
 \begin{equation} \label{eq:sparse_local_QP_phi}
   \begin{aligned}
    & \hspace{3cm} \phi_i(\myvar{y}) :=   \min_{\myvar{x}_i}  f_i(\myvar{x}_i)  \\
    &  s.t. \ \myvar{a}_i^{m \top} \myvar{x}_i  + \sum_{j\in N_i^m} (y_i^m - y_j^{m}) + b_i^m \leq 0,  \forall m\in \myset{M}_i.
    \end{aligned} 
\end{equation}
Define $\myvar{c}_i = (c_i^1, ..., c_i^m, ..., c_i^M)$, where $c_i^m$ is the Lagrange multiplier of \eqref{eq:sparse_local_QP_phi} if $m\in \myset{M}_i$, and is equal to $0$ otherwise. 

Similar to previous discussions, we have the constraint satisfaction property as follows. The proof is omitted for brevity. 
\begin{proposition}[Constraint satisfaction]
    For any given $\myvar{y}$, if $\myvar{x}_i, i\in \myset{I}$ is feasible to the local optimization problem in \eqref{eq:sparse_local_QP_phi}, then  $[\myvar{x}_1; \myvar{x}_2; ...; \myvar{x}_N]$ is a feasible solution to \eqref{eq:centralized_problem}.
\end{proposition}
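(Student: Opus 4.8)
The plan is to mirror the argument of Proposition~\ref{prop:constraint_satisfaction} from the dense case: summing the local constraints over the relevant agents makes all auxiliary-variable coupling terms cancel. First I would fix an arbitrary constraint index $m\in\myset{M}$. By the definition of $\myset{I}_m$, only agents $i\in\myset{I}_m$ contribute to the $m$-th constraint of \eqref{eq:centralized_problem}, since $\myvar{a}_j^m=\myvar{0}$ and $b_j^m=0$ whenever $j\notin\myset{I}_m$; hence $\sum_{i\in\myset{I}}(\myvar{a}_i^{m\top}\myvar{x}_i+b_i^m)=\sum_{i\in\myset{I}_m}(\myvar{a}_i^{m\top}\myvar{x}_i+b_i^m)$.

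Because $i\in\myset{I}_m$ is equivalent to $m\in\myset{M}_i$, each such agent carries the local inequality $\myvar{a}_i^{m\top}\myvar{x}_i+\sum_{j\in N_i^m}(y_i^m-y_j^m)+b_i^m\leq 0$ from \eqref{eq:sparse_local_QP_phi}. Summing these over $i\in\myset{I}_m$ gives $\sum_{i\in\myset{I}_m}\myvar{a}_i^{m\top}\myvar{x}_i+\sum_{i\in\myset{I}_m}\sum_{j\in N_i^m}(y_i^m-y_j^m)+\sum_{i\in\myset{I}_m}b_i^m\leq 0$. The key step is to show the middle double sum vanishes. I would recognize it as $\myvar{1}^\top L_{\mathcal{G}_m'}\myvar{y}^m$, where $L_{\mathcal{G}_m'}$ denotes the Laplacian of the graph $\mathcal{G}_m':=(\myset{I},E_m)$ defined above: for $i\in\myset{I}_m$ the neighbor set $N_i^m$ coincides with the neighbors of $i$ in $E_m$, while nodes $i\notin\myset{I}_m$ are isolated in $\mathcal{G}_m'$ and contribute nothing. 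Since $\myvar{1}^\top L_{\mathcal{G}_m'}=\myvar{0}$ for any graph Laplacian, the double sum is zero. Equivalently, one may argue directly that the undirected edge set makes $N_i^m$ symmetric, so each edge $\{i,j\}$ contributes $(y_i^m-y_j^m)+(y_j^m-y_i^m)=0$.

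Combining the two observations yields $\sum_{i\in\myset{I}}(\myvar{a}_i^{m\top}\myvar{x}_i+b_i^m)\leq 0$, which is exactly the $m$-th coupling constraint of \eqref{eq:centralized_problem}. As $m\in\myset{M}$ was arbitrary, all $M$ constraints hold and $[\myvar{x}_1;\dots;\myvar{x}_N]$ is feasible for \eqref{eq:centralized_problem}. I expect no genuine obstacle here; the only point requiring care is the restriction of each summation to $\myset{I}_m$ rather than all of $\myset{I}$, and the verification that $N_i^m$ equals the neighbor set of $i$ within $E_m$, so that the Laplacian cancellation transfers verbatim from the full graph used in Proposition~\ref{prop:constraint_satisfaction} to the sparsified one. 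Notably, the connectivity of $\mathcal{G}_m$ granted by Assumption~\ref{ass:sparsity} plays no role in this feasibility claim—it is needed only for the equivalence result—so it is not invoked.
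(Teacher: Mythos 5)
Your proof is correct and takes essentially the approach the paper intends: the paper omits this proof as ``similar to previous discussions,'' i.e., to Proposition~\ref{prop:constraint_satisfaction}, and your argument is exactly that adaptation --- sum the $m$-th local constraint over the involved agents $\myset{I}_m$ (noting $\myvar{a}_j^m = \myvar{0}$, $b_j^m = 0$ for $j\notin \myset{I}_m$) and cancel the coupling terms via the Laplacian row-sum / edge-symmetry property of $\mathcal{G}_m^\prime$. Your closing observation that the connectivity required by Assumption~\ref{ass:sparsity} is irrelevant to this feasibility direction (it is only needed for the equivalence result) is also accurate.
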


Define the summation of the optimal values of the local optimization problems as
\begin{equation} \label{eq:sparse_phi_y}
    \phi(\myvar{y}) = \sum_{i\in \myset{I}} \phi_i(\myvar{y}).
\end{equation}

\begin{proposition}[Subdifferential] \label{prop:subdifferential_sparse}
    Let Assumptions~\ref{ass:convex_f},~\ref{ass:finite_cost} ,~\ref{ass:local_slater} and~\ref{ass:sparsity} hold.  Let $\myvar{c}_i= (c_i^1, ..., c_i^M)\in \mathbb{R}^m$ be defined below \eqref{eq:sparse_local_QP_phi}. Define $\zeta_i^m = \sum_{j\in N_i^m} (c_i^m - c_j^m)$ if $m\in \myset{M}_i$ and $\zeta_i^m = 0$ if $m\notin \myset{M}_i$. Let $\myvar{\zeta}^m = (\zeta_1^m, \zeta_2^m,..., \zeta_N^m), \myvar{\zeta} = \begin{bmatrix}
    \myvar{\zeta}^1;  \myvar{\zeta}^2;  ...;  \myvar{\zeta}^M
\end{bmatrix}$. Then 
\begin{enumerate}
    \item $\myvar{c}_i\in \mathbb{R}^m$, not necessarily unique, always exists,
    \item $\phi(\myvar{y})$ is convex,
    \item $\myvar{\zeta} \in \partial \phi(\myvar{y}).$
\end{enumerate}
   
\end{proposition}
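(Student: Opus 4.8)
The plan is to mirror, line by line, the three-step argument already used for Proposition~\ref{prop:subdifferential}, now applied to the reduced local program~\eqref{eq:sparse_local_QP_phi} whose inequality constraints are indexed only by $m\in\myset{M}_i$. The only genuinely new ingredient is that the single Laplacian $L$ is replaced by the Laplacians of the induced subgraphs $\mathcal{G}_m^{\prime}$, and these are exactly the objects packaged into the block-diagonal matrix $\tilde{L}_i$ appearing in~\eqref{eq:sparse_sensitivity}.

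For Point 1, I would invoke strong duality verbatim. Under Assumptions~\ref{ass:convex_f},~\ref{ass:finite_cost}, and~\ref{ass:local_slater}, problem~\eqref{eq:sparse_local_QP_phi} is a convex program with affine inequality constraints satisfying Slater's condition, so the strong duality theorem for convex inequality constraints guarantees that at least one multiplier $(c_i^m)_{m\in\myset{M}_i}$ exists; extending it by the convention $c_i^m=0$ for $m\notin\myset{M}_i$ stated below~\eqref{eq:sparse_local_QP_phi} produces the claimed $\myvar{c}_i\in\mathbb{R}^M$, with no uniqueness asserted. For Point 2, the convexity argument is identical to the dense case: for parameters $\myvar{y}^\prime,\myvar{y}^{\prime\prime}$ with optimizers $\myvar{x}^\prime,\myvar{x}^{\prime\prime}$, the convex combination $\lambda\myvar{x}^\prime+(1-\lambda)\myvar{x}^{\prime\prime}$ stays feasible for $\lambda\myvar{y}^\prime+(1-\lambda)\myvar{y}^{\prime\prime}$ because every constraint is affine jointly in $(\myvar{x}_i,\myvar{y})$, and convexity of $f_i$ then yields $\phi_i(\lambda\myvar{y}^\prime+(1-\lambda)\myvar{y}^{\prime\prime})\le\lambda\phi_i(\myvar{y}^\prime)+(1-\lambda)\phi_i(\myvar{y}^{\prime\prime})$; summing over $i\in\myset{I}$ gives convexity of $\phi$.

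For Point 3, I would read the subgradient directly off the perturbation inequality~\eqref{eq:sparse_sensitivity}. Sensitivity analysis applied to~\eqref{eq:sparse_local_QP_phi} gives its first line; the terms with $m\notin\myset{M}_i$ drop out since $c_i^m=0$ there, so the bound collapses to $\phi_i(\myvar{y}^\prime)\ge\phi_i(\myvar{y})+\myvar{c}_i^\top\tilde{L}_i(\myvar{y}^\prime-\myvar{y})$, identifying $\tilde{L}_i^\top\myvar{c}_i$ as a subgradient of $\phi_i$ at $\myvar{y}$. The subdifferential sum rule $\partial(f_1+f_2)\supseteq\partial f_1+\partial f_2$ then gives $\sum_{i\in\myset{I}}\tilde{L}_i^\top\myvar{c}_i\in\partial\phi(\myvar{y})$. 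Finally I reorganize this stacked vector by constraints: its $m$-th block is $\sum_{i\in\myset{I}}c_i^m\,\tilde{\myvar{l}}_{i,\mathcal{G}_m}^\top$, and since $\tilde{\myvar{l}}_{i,\mathcal{G}_m}$ is the $i$-th row of the symmetric Laplacian of $\mathcal{G}_m^{\prime}$, stacking over $i$ recovers that Laplacian acting on $\myvar{c}^m$, whose $j$-th entry is precisely $\sum_{i\in N_j^m}(c_j^m-c_i^m)=\zeta_j^m$ (and $0$ when $m\notin\myset{M}_i$). Hence $\myvar{\zeta}\in\partial\phi(\myvar{y})$.

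The main obstacle is bookkeeping rather than a new idea: I must verify that the block-diagonal $\tilde{L}_i$ in~\eqref{eq:sparse_sensitivity} aligns, row-by-row and block-by-block, with the subgraph Laplacians of $\mathcal{G}_m^{\prime}$, and that the zero-padding convention $c_i^m=0$ for $m\notin\myset{M}_i$ propagates consistently so that only active constraints contribute while the assembled expression still has the clean global form $\myvar{\zeta}$. Confirming this identification, together with the symmetry of each subgraph Laplacian, is exactly what converts the per-agent sensitivity bound into the distributed subgradient formula.
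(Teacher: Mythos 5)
Your proposal is correct and follows essentially the same route as the paper's proof: Points 1 and 2 by repeating the strong-duality and convexity arguments of Proposition~\ref{prop:subdifferential}, and Point 3 by reading the subgradient $\tilde{L}_i^\top\myvar{c}_i$ off the sensitivity inequality \eqref{eq:sparse_sensitivity}, applying the subdifferential sum rule, and identifying the stacked vector with $\myvar{\zeta}$. The only cosmetic difference is that you assemble the $m$-th block as the symmetric subgraph Laplacian $L_{\mathcal{G}_m^\prime}$ acting on $\myvar{c}^m$, whereas the paper evaluates the entries of $\sum_k c_k^m \tilde{\myvar{l}}_{k,\mathcal{G}_m}^\top$ element-by-element in \eqref{eq:sparse_subdifferentiable}; both computations are equivalent, and your justification that the $m\notin\myset{M}_i$ terms vanish (via $c_i^m=0$, the paper instead citing $\tilde{\myvar{l}}_{i,\mathcal{G}_m}=\myvar{0}$) is equally valid.
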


\begin{proof}

    The first two claims follow from a similar argument of that of Proposition~\ref{prop:subdifferential} and are omitted here. We will show Point 3) in the following. From sensitivity analysis, for any $\myvar{y}, \myvar{y}^\prime$, we know \eqref{eq:sparse_sensitivity} holds, where $\tilde{\myvar{l}}_{i,\mathcal{G}_m} $ is the $i$-th row of the Laplacian matrix of $\mathcal{G}_m^\prime = (\myset{I}, E_{m})$.  The first inequality in \eqref{eq:sparse_sensitivity} is from \cite[Equation 5.57]{boyd2004convex}, and the last two equalities follow from the definition of graph Laplacian matrices and noting that when $m\notin \myset{M}_i$, $\tilde{\myvar{l}}_{i,\mathcal{G}_m}   = \myvar{0}$.  \eqref{eq:sparse_sensitivity} implies that $\tilde{L}_i^{\top}\myvar{c}_i$ is a subgradient of $\phi_i(\myvar{y})$.

    Following the calculus rule for subdifferential,  $\sum_i \tilde{L}_i^{\top} \myvar{c}_i$ is a subgradient of $\phi(\myvar{y})$. With a few linear algebraic operations, we know 
    \begin{equation*}
        \tilde{L}_i^{\top} \myvar{c}_i  = \begin{bmatrix}
    \tilde{\myvar{l}}_{i,\mathcal{G}_1}^{\top} & 0 & ... & 0 \\
    0 & \tilde{\myvar{l}}_{i,\mathcal{G}_2}^{\top} & ... & 0 \\
    0 & 0 & ... & 0 \\
    0 & 0 & ... & \tilde{\myvar{l}}_{i,\mathcal{G}_M}^{\top} 
    \end{bmatrix} \begin{bmatrix}
    c_i^1\\
    c_i^2 \\
    ...\\
    c_i^M
\end{bmatrix} =  \begin{bmatrix}
            c_i^1 \tilde{\myvar{l}}_{i,\mathcal{G}_1}^{\top} \\
             c_i^2 \tilde{\myvar{l}}_{i,\mathcal{G}_2}^{\top} \\
            ... \\
           c_i^M \tilde{\myvar{l}}_{i,\mathcal{G}_M}^{\top} 
        \end{bmatrix}
    \end{equation*}
    Thus, the entry in $\sum_i \tilde{L}_i^{\top} \myvar{c}_i$ that corresponds to $y_i^m$ is 
    \begin{multline} \label{eq:sparse_subdifferentiable}
        \left[\displaystyle \sum_{k\in \myset{I}}  c_k^m \tilde{\myvar{l}}_{k,\mathcal{G}_m}^{\top} \right]_i = \sum_{k\in \myset{I}}\left[\tilde{\myvar{l}}_{k,\mathcal{G}_m}^{\top}\right]_i c_k^m \\=
    \begin{cases}
        \sum_{j\in N_i^m}( c_i^m - c_j^m) & \text{if } m\in \myset{M}_i\\
         0 & \text{if } m\notin \myset{M}_i
    \end{cases}.
    \end{multline}
    The second equality comes from the fact that $[\myvar{l}_{k,\mathcal{G}_m}^{\prime,\top}]_i = [L_{\mathcal{G}_m^\prime}]_{k,i}$, the $(k,i)$-th element of the Laplacian of the graph $\mathcal{G}_m^\prime = (\myset{I}, E_{m})$. Thus $\left[\myvar{l}_{k,\mathcal{G}_m}^{\prime,\top}\right]_i$ is equal to $0$ if $m\notin \myset{M}_i$, or $k\neq i \  \wedge \ k\notin N_i^m$;   $-1$ if $k\in N_i^m$; and  $|N_i^m|$ if $k= i$.  Thus we  conclude $\myvar{\zeta} $ is a subgradient of $\phi(\myvar{y})$.    
\end{proof}

The distributed optimization algorithm  for node $i$ in the sparse case is given as follows.

\begin{algorithm} 
\caption{Sparse Distributed Optimization Algorithm}
\label{alg:sparse_distributed_optimization}
\begin{algorithmic}[1]
\State \textbf{Local stored state:}  $\myvar{x}_i, \myvar{y}_i$
\State  \textbf{Initialization:} Choose  arbitrary $\myvar{y}_i$, and let $y_i^m = 0$ if $m\notin \myset{M}_i$.
\Loop
\State Send $y_i^m$ to and gather $y_j^m$ from $j\in N_i^m, \forall m \in \mathcal{M}_i$
\State Compute $(\myvar{x}_i, \myvar{c}_i)$ from \eqref{eq:sparse_local_QP_phi} 
\State Send $c_i^m$ to and gather $c_j^m$ from $j\in N_i^m, \forall m \in \mathcal{M}_i$
\State Update $\myvar{y}_i$ according to \eqref{eq:spares_adaptive law}
\EndLoop
\end{algorithmic}
\end{algorithm}

The main difference compared to Algorithm~\ref{alg:distributed_optimization} is that 1) now node $i$  receives $\myvar{y}_j, \myvar{c}_j$ from and send $\myvar{y}_i, \myvar{c}_i$ to neighboring nodes that are relevant, as in set $N_i^m$ in Steps 4 and 7; and 2) only relevant components in $\myvar{y}_j$ and $\myvar{c}_j$ are taken into account, as in \eqref{eq:sparse_local_QP_phi} and \eqref{eq:spares_adaptive law} in Steps 5 and 6. This leads to fewer variables to communicate among the network and smaller size local optimization problems to solve.

The following theorem states the convergence property.
\begin{thm}
    Let Assumptions~\ref{ass:convex_f},~\ref{ass:finite_cost},~\ref{ass:local_slater},~\ref{ass:sparsity} hold and $\myvar{c}_i$ as defined below \eqref{eq:sparse_local_QP_phi}. If $\myvar{y}_i(t)$  is updated according to 
\begin{equation} \label{eq:spares_adaptive law}
   \dot{y}_i^m = \begin{cases}
       - k_0 \sum_{j\in N_i^m } (c_i^m - c_j^m),  & \text{if } m\in \myset{M}_i \\
       0 & \text{if } m\notin \myset{M}_i
   \end{cases},
\end{equation}
for almost all $t$, where $k_0>0$ is a constant gain and a Caratheodory solution exists, then the properties in Theorem~\ref{thm:convergence and constraints} hold.
\end{thm}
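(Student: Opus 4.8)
The plan is to mirror the proof of Theorem~\ref{thm:convergence and constraints} almost verbatim, the only substantive change being to invoke Proposition~\ref{prop:subdifferential_sparse} in place of its dense-case analogue. First I would rewrite the stacked dynamics: the update law \eqref{eq:spares_adaptive law} sets $\dot{y}_i^m = -k_0 \sum_{j\in N_i^m}(c_i^m - c_j^m)$ when $m\in\myset{M}_i$ and $\dot{y}_i^m = 0$ otherwise, which is precisely $\dot{\myvar{y}} = -k_0\myvar{\zeta}$ with $\myvar{\zeta}$ the vector constructed in Proposition~\ref{prop:subdifferential_sparse}. Since that proposition establishes $\myvar{\zeta}\in\partial\phi(\myvar{y})$ for the sparse objective $\phi$ defined in \eqref{eq:sparse_phi_y}, the trajectory satisfies the differential inclusion $\dot{\myvar{y}}\in -k_0\partial\phi(\myvar{y})$ for almost all $t$. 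I would also note that the components $y_i^m$ with $m\notin\myset{M}_i$ are held at zero, so the dynamics remain confined to the corresponding coordinate subspace; this is consistent because both the dependence of $\phi$ on these coordinates and the subgradient in these directions vanish by construction.

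Next I would reuse the nonsmooth-analysis machinery exactly. Convexity of $\phi$ (Point 2 of Proposition~\ref{prop:subdifferential_sparse}) gives that $\partial\phi$ is nonempty, compact, and convex-valued, so by the gradient-differential-inclusion result in \cite{cortes2008discontinuous} a unique Caratheodory solution of $\dot{\myvar{y}}\in-k_0\partial\phi(\myvar{y})$ exists, and the assumed Caratheodory solution of \eqref{eq:spares_adaptive law} coincides with it. Computing the set-valued lower Lie derivative $\underline{L}_{-k_0\partial\phi}\phi$ along the inclusion, I would show $\sup\underline{L}_{-k_0\partial\phi}\phi(\myvar{y})\le 0$ with equality iff $\myvar{0}\in\partial\phi(\myvar{y})$, so that $\phi(\myvar{y}(t))$ is nonincreasing and strictly decreasing away from stationarity. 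The contradiction argument with the finite optimal value (Assumption~\ref{ass:finite_cost}) then forces $\limsup_{t\to\infty} h(t)=0$, where $h(t)$ denotes the supremum of the lower Lie derivative along the trajectory, yielding subsequential convergence of $\myvar{y}(t)$ to the optimal set $\{\myvar{y}:\myvar{0}\in\partial\phi(\myvar{y})\}$, which is Point 1.

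Points 2 and 3 would then follow immediately from the structural results already proved in the sparse setting: the equivalence between \eqref{eq:centralized_problem} and \eqref{eq:reformulated_problem_sparsity} gives that the recovered primal converges to an optimizer of the centralized problem, and the sparse constraint-satisfaction proposition together with Assumption~\ref{ass:local_slater} guarantees feasibility of all coupling constraints throughout the evolution. The point requiring the most care is the same as in Theorem~\ref{thm:convergence and constraints}: the sparse $\phi$ again possesses infinitely many, possibly unbounded, minima, since it is invariant under adding a constant to $\myvar{y}^m$ on each connected component of every induced subgraph $\mathcal{G}_m$. Consequently neither a Lyapunov argument presuming isolated minima nor LaSalle's principle presuming bounded minima applies, and I would retain the $\limsup$-of-Lie-derivative argument establishing convergence to the optimal \emph{set} rather than attempting convergence to a single point.
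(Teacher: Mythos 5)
Your proposal is correct and follows essentially the same route as the paper: the paper's proof likewise observes that the stacked update \eqref{eq:spares_adaptive law} is exactly $-k_0$ times the subgradient $\myvar{\zeta}$ constructed in Proposition~\ref{prop:subdifferential_sparse}, so that $\dot{\myvar{y}} \in -k_0\,\partial\phi(\myvar{y})$ for almost all $t$, and then declares the analysis of Theorem~\ref{thm:convergence and constraints} applicable verbatim. Your write-up simply makes explicit the steps the paper leaves implicit (the Lie-derivative and $\limsup$ arguments, the vanishing dependence of $\phi$ on the frozen coordinates $y_i^m$, $m\notin\myset{M}_i$, and the unbounded optimal set), all of which are consistent with the paper.
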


\begin{proof}

   In view of \eqref{eq:sparse_subdifferentiable}, we observe that $\dot{y}_i^m(t)$ given in \eqref{eq:spares_adaptive law} is exactly the product of \eqref{eq:sparse_subdifferentiable} together with a constant negative gain. Stacking $\dot{y}_i^m$ by the order of constraints, we obtain  that $ \dot{\myvar{y}}\in  -\partial k_0 \phi(\myvar{y})$ for almost all $t$ from Proposition~\ref{prop:subdifferential_sparse}.
     Thus, the analyses in the proof of Theorem~\ref{thm:convergence and constraints} are also applicable here and are neglected.
\end{proof}

\section{A special case: control barrier functions-induced quadratic programs} \label{sec:qp_case}

 \subsection{Control barrier functions and safe distributed control}
 In this section, we consider the application of our proposed algorithms in the context of control barrier function-induced safe distributed control. CBF approaches \cite{Wang2017a,fernandez2023distributed, mestres2024distributed} have already been applied to multi-robot safe coordination with pair-wise constraints, for example, collision avoidance and connectivity maintenance. In the following we introduce a more general formulation that further includes  collective constraints.
 
 Consider a multi-agent system with $\myvar{p}_i$ the state vector of agent $i, i\in \myset{I}$. The dynamics of each agent is given by $$\dot{\myvar{p}}_i = \mathfrak{f}_i(\myvar{p}_i) + \mathfrak{g}_i(\myvar{p}_i) \myvar{x}_{i},$$ where $\mathfrak{f}_i, \mathfrak{g}_i $ are locally Lipschitz vector fields, $\myvar{x}_{i}$ is the control input to agent $i$ that is yet to be designed. Suppose that this multi-agent system is subject to $M$ coupling state constraints, that is, along the system trajectory, it has to maintain $$h^m(\myvar{p}_1,...,\myvar{p}_N)\geq 0, m\in \myset{M},$$ and $h^m$ is a smooth function. The constraint $h^m$ is a collective constraint since it restricts the states of all agents. We call  the following inequality a CBF condition associated with the $m$-th constraint
    \begin{equation} \label{eq:cbf_cond}
        \sum_{i\in \myset{I}}\myvar{a}_i^{m\top} \myvar{x}_{i} + b_i^m\leq  0,
  \end{equation}
  where $\myvar{a}_i^{m\top} = -\nabla_{\myvar{p}_i} h^{m\top} \mathfrak{g}(\myvar{p}_i)$, $b_i^m, i\in \myset{I}$ are such that $ \sum_{i\in \myset{I} }  b_i^m = -k h^m (\myvar{p}_1,...,\myvar{p}_N) - \sum_{i\in \myset{I}} \nabla_{\myvar{p}_i} h^{m\top}  \mathfrak{f}_i(\myvar{p}_i)$, and constant $k>0$. Here we use linear functions instead of the extended class $\mathcal{K}$ functions for simplicity. Results from  CBF literature \cite{Ames2019control} state that, if a locally Lipschitz control input $\myvar{x}_i, i\in \myset{I},$ is chosen such that \eqref{eq:cbf_cond}
  holds for all time, then the set $\{(\myvar{p}_1, \myvar{p}_2, ..., \myvar{p}_N): h^m(\myvar{p}_1,...,\myvar{p}_N)\geq 0\}$ is forward invariant and the $m$-th state constraint is always satisfied, provided that it is initially satisfied.  In the following analysis we will neglect the explicit state constraints $h^m(\myvar{p}_1, \myvar{p}_2, ..., \myvar{p}_N)\geq 0$ and instead focus on enforcing the CBF conditions for all time.

 In CBF literature \cite{Ames2019control,tan2021distributed}, a commonly used formulation to enforce the CBF conditions is expressed as a quadratic program as follows
 \begin{equation} \label{eq:CBF_problem}
\begin{split}
    &\min_{\myvar{x}_1,\dots,\myvar{x}_N} \sum_{i\in\myset{I}}  \frac{1}{2}\| \myvar{x}_i - \myvar{x}_{nom,i}\|^2 \\
    & {s.t.} \quad \left\{
    \begin{array}{cc}
      \sum_{i\in\myset{I}} {\myvar{a}_i^{1 \top}}\myvar{x}_i + b_i^1  \leq 0, \\
         \vdots \\
       \sum_{i\in\myset{I}} {\myvar{a}_i^{M \top}}\myvar{x}_i + b_i^M  \leq 0,
    \end{array}
    \right.
\end{split}
\end{equation}
   The quadratic program in \eqref{eq:CBF_problem} aims at minimally modifying nominal local controllers $\myvar{x}_{nom,i}$, which relate to the underlying system tasks, while always respecting $M$ CBF conditions, hereafter referred to as coupling constraints, which relate to system safety.

 One challenging problem that naturally arises from this formulation is, for multi-agent systems where agents only have local information, how to solve this optimization problem in a distributed way with guaranteed safety assurance.

  We assume that $\myvar{a}_i, b_i, \myvar{x}_{nom,i}$ only depend on locally obtainable information from the  neighbors of the $i$-th agent, and that the quadratic program is always feasible. These are referred to as the local obtainability and compatibility properties of the CBF conditions as discussed in \cite{tan2021distributed, tan2022compatibility}. One illustrative application of coordinating a multi-agent system with multiple state constraints is shown later in the simulation section.

  What sets apart the CBF-induced QP in \eqref{eq:CBF_problem} from the previously discussed distributed optimization formulation is that, since the solution to \eqref{eq:CBF_problem} is applied as feedback to a dynamical system and $\myvar{x}_{nom,i}, \myvar{a}_i^m, b_i^m$ are state-dependent, they are time-varying in nature rather than static and evolve along the closed-loop system trajectory. In the following, we will first take a look at a time-invariant instance of \eqref{eq:CBF_problem}, i.e., when $\myvar{x}_{nom,i}, \myvar{a}_i^m$ and $ b_i^m$ are constant, and then analyze the performance of our proposed algorithm in a time-varying setting. We note that the results in this section are also applicable to the sparsely coupled case. This extension is straightforward and neglected for notation simplicity.

  Following the previous analysis, with the help of an auxiliary variable $\myvar{y}\in \mathbb{R}^{NM}$, we have $\phi(\myvar{y}) = \sum_{i\in \myset{I}} \phi_i(\myvar{y})$ with $ \phi_i(\myvar{y})$ given by the local optimization problems
 \begin{equation} \label{eq:local_problem_qp}
\begin{split}
    \phi_i(\myvar{y}) = &\min_{\myvar{x}_i} \frac{1}{2} \lVert \myvar{x}_i - \myvar{x}_{nom,i} \rVert^2 \\
    & {s.t. }  \ {A}_i^\top \myvar{x}_i + ({I}_M \otimes \myvar{l}_i)\myvar{y} + \myvar{b}_i\leq 0.
\end{split}
\end{equation}

\begin{proposition} \label{prop:local analytical solution}
Consider the  local QP in \eqref{eq:local_problem_qp} with constant $\myvar{x}_{nom,i}, A_i, \myvar{y}, \myvar{b}_i$. Assume that Assumption~\ref{ass:local_slater} holds. Then the Lagrange multiplier $\myvar{c}_i = (c^1_i, c^2_i, ..., c^M_i)$ exists and satisfies the algebraic equation
 \begin{multline}\label{eq:local_C}
        \Lambda_i \myvar{c}_i = \max\left( {A}_i^\top \myvar{x}_{nom,i}+ ({I}_M \otimes \myvar{l}_i)\myvar{y}\right.\\  +\myvar{b}_i \left. + ( \Lambda_i - {A}_i^\top {A}_i)\myvar{c}_i ,\myvar{0}\right),
 \end{multline} 
 where $\Lambda_i = \textup{diag}({A}_i^\top {A}_i)$, and the primal-dual optimizer to \eqref{eq:local_problem_qp} is $( \myvar{x}_{nom,i} - A_i \myvar{c}_i, \myvar{c}_i )$. The optimal cost is $\frac{1}{2}\| A_i \myvar{c}_i\|^2$.
 
\end{proposition}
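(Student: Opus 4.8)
The plan is to exploit that \eqref{eq:local_problem_qp} is a strictly convex quadratic program with affine constraints, write down its KKT system, and then recast the complementary-slackness conditions componentwise as the fixed-point equation \eqref{eq:local_C}. Existence of a (possibly non-unique) multiplier $\myvar{c}_i \geq \myvar{0}$ together with strong duality is already guaranteed by Assumption~\ref{ass:local_slater} and Point~1) of Proposition~\ref{prop:subdifferential}; strict convexity of the objective additionally makes the primal optimizer unique. Writing the Lagrangian $\tfrac{1}{2}\|\myvar{x}_i - \myvar{x}_{nom,i}\|^2 + \myvar{c}_i^\top(A_i^\top \myvar{x}_i + (I_M\otimes\myvar{l}_i)\myvar{y} + \myvar{b}_i)$, the stationarity condition reads $\myvar{x}_i - \myvar{x}_{nom,i} + A_i\myvar{c}_i = \myvar{0}$, which gives the primal optimizer $\myvar{x}_i^\star = \myvar{x}_{nom,i} - A_i\myvar{c}_i$. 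Substituting this back into the objective yields $\tfrac{1}{2}\|\myvar{x}_i^\star - \myvar{x}_{nom,i}\|^2 = \tfrac{1}{2}\|A_i\myvar{c}_i\|^2$, establishing the claimed optimal cost.

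For the remaining step I introduce the slack $\myvar{s}_i := A_i^\top \myvar{x}_i^\star + (I_M\otimes\myvar{l}_i)\myvar{y}+\myvar{b}_i$. Substituting $\myvar{x}_i^\star$ gives $\myvar{s}_i = \myvar{w}_i - A_i^\top A_i \myvar{c}_i$ with $\myvar{w}_i := A_i^\top\myvar{x}_{nom,i} + (I_M\otimes\myvar{l}_i)\myvar{y}+\myvar{b}_i$, and the primal feasibility, dual feasibility, and complementary slackness conditions become the linear complementarity system $\myvar{c}_i\geq\myvar{0}$, $\myvar{s}_i\leq\myvar{0}$, and $c_i^m s_i^m = 0$ for all $m$. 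I would then observe that the argument of the $\max$ in \eqref{eq:local_C} is exactly $\Lambda_i\myvar{c}_i + \myvar{s}_i$, since $\myvar{w}_i + (\Lambda_i - A_i^\top A_i)\myvar{c}_i = \Lambda_i\myvar{c}_i + (\myvar{w}_i - A_i^\top A_i\myvar{c}_i) = \Lambda_i\myvar{c}_i + \myvar{s}_i$. Hence \eqref{eq:local_C} is equivalent to the componentwise identities $\lambda_i^m c_i^m = \max(\lambda_i^m c_i^m + s_i^m,\, 0)$, where $\lambda_i^m := (\Lambda_i)_{mm} = \|\myvar{a}_i^m\|^2$.

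It then suffices to prove the scalar fact that, for $\lambda>0$, the complementarity conditions $c\geq0$, $s\leq0$, $cs=0$ are equivalent to $\lambda c = \max(\lambda c + s,\,0)$. This is a short case analysis: if $c>0$ then $s=0$ and both sides equal $\lambda c$; if $c=0$ then the left side is $0$ while $\max(s,0)=0$ because $s\leq0$; conversely the $\max$-identity forces exactly one of $c=0$ or $s=0$ with the correct signs. Applying this componentwise to each multiplier closes the argument and recovers \eqref{eq:local_C}.

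The main obstacle I anticipate is the degenerate case $\lambda_i^m = \|\myvar{a}_i^m\|^2 = 0$, where the $m$-th row of $A_i^\top A_i$ vanishes and \eqref{eq:local_C} no longer pins down $c_i^m$. I would handle this by noting that such a constraint does not involve $\myvar{x}_i$, so it holds by feasibility of $\myvar{y}$ and one may take $c_i^m = 0$; this choice is consistent with \eqref{eq:local_C}, whose $m$-th equation then reduces to $0 = \max(s_i^m,0)=0$. Alternatively, the sufficient condition $\textup{Rank}(A_i)=M$ noted below Assumption~\ref{ass:local_slater} makes $A_i^\top A_i$ positive definite and excludes this case outright, so that every diagonal entry $\lambda_i^m$ is strictly positive and the scalar equivalence applies verbatim.
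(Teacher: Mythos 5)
Your proof is correct and takes essentially the same route as the paper's: both write the KKT system for \eqref{eq:local_problem_qp}, use stationarity to express the primal optimizer as $\myvar{x}_{nom,i} - A_i \myvar{c}_i$, and recast the componentwise complementarity conditions as the fixed-point equation \eqref{eq:local_C}. Your scalar lemma ($c\geq 0$, $s\leq 0$, $cs=0$ iff $\lambda c = \max(\lambda c + s, 0)$ for $\lambda>0$) is simply a cleaner packaging of the paper's per-constraint two-case analysis (its case split on the sign of $[{A_i}^\top\myvar{x}^{m\prime}_{nom,i}+\myvar{b}_i^\prime]_m$ is exactly your split on the sign of $\lambda c + s$, and it covers both directions of the equivalence at once), while your explicit handling of the degenerate case $\|\myvar{a}_i^m\|=0$ is a minor refinement that the paper leaves implicit since Assumption~\ref{ass:local_slater} rules it out.
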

The proof follows from analyzing the Karush–Kuhn–Tucker (KKT) condition of the Lagrangian and is given in Appendix for the sake of completeness. A similar result was reported in \cite[Theorem 5]{santilli2020distributed}. There however  $A_i, \Lambda_i $ are assumed to be non-singular.

For the time-invariant optimization problem, if the local optimization problem in \eqref{eq:local_problem_qp} always fulfills the Linearly Independent Constraint Qualification\footnote{We say that the Linearly Independent Constraint Qualification is satisfied at $\myvar{x}^\star$ for an optimization problem, if the gradients of all active constraints are linearly independent.} (LICQ) by, for example, $A_i$ being full column rank, then \cite[Corollary 7.3.1]{florenzano2001finite} 
states that $\phi_i(\myvar{y})$ in \eqref{eq:local_problem_qp} is differentiable as discussed in Remark~\ref{rem:differentiability}. In this case, the subgradient reduces to the gradient in the normal sense, and the adaptive law in \eqref{eq:adaptive law}  becomes a gradient flow. Under Assumptions~\ref{ass:finite_cost},~\ref{ass:local_slater} and the convexity of $\phi(\myvar{y})$, the convergence guarantee of $\phi(\myvar{y}(t))$ is well-established in view that it is lower bounded and monotonically decreasing.

\subsection{Finite-time convergence for time-varying optimizations}
In this subsection, we will strengthen our proposed algorithms of the previous sections to compensate for the time-varying parameters in the QP. Under LICQ, since the update law in \eqref{eq:adaptive law} is a gradient flow, one could  modify it  to a scaled normalized gradient flow as in \cite[(6b)]{cortes2006finite}, that is, 
\begin{equation} \label{eq:adaptive law finite time}
    \dot{\myvar{y}}_i = - k_0 \sign{\frac{\partial}{\partial \myvar{y}_i} }\phi(\myvar{y}) =  - k_0 \sign \sum_{j\in N_i} (\myvar{c}_i - \myvar{c}_j). 
\end{equation}
\cite{cortes2006finite} shows that the normalized gradient flow achieves finite-time convergence for static strongly convex functions. However, this result is not directly applicable here since $\phi(\myvar{y})$ is not strongly convex. Moreover, for the CBF-induced QP in \eqref{eq:CBF_problem},  $\myvar{x}_{nom,i}, \myvar{a}_i^m, b_i^m$ depend on the states of the multi-agent system and are time-varying. Before presenting the main result, we introduce the following Lemma that will simplify the analysis later on.
\begin{lemma} \label{lem:rearrangement}
    Let  $\myvar{w}_C$ be the vector of $\{ w_i^m \}_{i\in \myset{I}, m\in \myset{M}}, w_i^m\in \mathbb{R} $ 
 ordered by constraints. Define $\myvar{v}_C := I_M\otimes L\myvar{w}_C,  \myvar{v}_A := [I_M\otimes \myvar{l}_1;I_M\otimes \myvar{l}_2;...;I_M\otimes \myvar{l}_N] \myvar{w}_C $, where $\myvar{l}_i$ is the $i$-th row of the Laplacian $L$. Then, $\myvar{v}_C = J \myvar{v}_A$, where $J$ is the permutation matrix defined in Section~\ref{sec:setup}.A. 
\end{lemma}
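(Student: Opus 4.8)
The plan is to prove this entirely by bookkeeping on the index orderings: I would show that both $\myvar{v}_C$ and $\myvar{v}_A$ consist of exactly the same collection of scalars $\{ \myvar{l}_i \myvar{w}^m \}_{i\in \myset{I}, m\in \myset{M}}$, differing only in the order in which these scalars are stacked, and that the map taking one ordering to the other is precisely the permutation $J$ from Section~\ref{sec:setup}.A.

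First I would unpack $\myvar{v}_C$. Writing $\myvar{w}^m = (w_1^m, \dots, w_N^m)$, the constraint-ordered stacking $\myvar{w}_C = [\myvar{w}^1; \myvar{w}^2; \dots; \myvar{w}^M]$ has $\myvar{w}^m$ as its $m$-th block of length $N$. Since $I_M \otimes L$ is block-diagonal with $M$ copies of $L$, the $m$-th block of $\myvar{v}_C = (I_M \otimes L)\myvar{w}_C$ is $L\myvar{w}^m$, whose $i$-th component is $\myvar{l}_i \myvar{w}^m$. Hence the entry of $\myvar{v}_C$ indexed by constraint $m$ and agent $i$ equals $\myvar{l}_i \myvar{w}^m$.

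Next I would unpack $\myvar{v}_A$. Its $i$-th block of length $M$ is $(I_M \otimes \myvar{l}_i)\myvar{w}_C$. As $I_M \otimes \myvar{l}_i$ is block-diagonal with $M$ copies of the row $\myvar{l}_i$, applying it to $\myvar{w}_C$ yields the stacked vector $[\myvar{l}_i \myvar{w}^1; \myvar{l}_i \myvar{w}^2; \dots; \myvar{l}_i \myvar{w}^M]$, so its $m$-th component is again $\myvar{l}_i \myvar{w}^m$. Thus the entry of $\myvar{v}_A$ indexed by agent $i$ and constraint $m$ also equals $\myvar{l}_i \myvar{w}^m$.

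The two entrywise formulas show that $\myvar{v}_C$ stores the scalars $\{\myvar{l}_i \myvar{w}^m\}$ in constraint-major order while $\myvar{v}_A$ stores the identical scalars in agent-major order. This is exactly the relation between $\myvar{b}_C$ and $\myvar{b}_A$, governed by $\myvar{b}_C = J \myvar{b}_A$, where $J$ depends only on $N$ and $M$ and acts as the fixed reordering from any agent-major stacking to the corresponding constraint-major stacking, independently of the actual scalar values. Specializing this reordering to the scalars $\myvar{l}_i \myvar{w}^m$ gives $\myvar{v}_C = J \myvar{v}_A$. The only mild obstacle is keeping the Kronecker-product index bookkeeping consistent between the two stackings; once the two entrywise expressions are in hand, the conclusion follows immediately from the defining property of $J$.
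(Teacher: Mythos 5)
Your proof is correct and follows essentially the same route as the paper's: both identify the entries of $\myvar{v}_C$ and $\myvar{v}_A$ as the same collection of scalars $\myvar{l}_i\myvar{w}^m$ stacked in constraint-major versus agent-major order, and then invoke the defining property of the value-independent permutation $J$. You simply spell out the Kronecker-product block computations that the paper's proof treats as ``clear,'' which is a fine (and slightly more explicit) presentation of the identical argument.
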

\begin{proof}
    Define $\myvar{w}^m = (w_1^m, ..., w_N^m)$. Let $v_i^m = [L\myvar{w}^m]_i$. Then it is clear that $\myvar{v}_C$ is the vector of $\{v_i^m \}_{i\in \myset{I}, m\in \myset{M}}$ ordered by constraints, and $\myvar{v}_A$ is the vector of $\{v_i^m \}_{i\in \myset{I}, m\in \myset{M}}$ ordered by agents. Thus $\myvar{v}_C = J \myvar{v}_A$.
\end{proof}
 The main result is summarized in the theorem below with slowly time-varying $\myvar{x}_{nom,i}(t), \myvar{a}_i^m(t), b_i^m(t)$. Here we note that LICQ is imposed as an easy-to-check sufficient condition for guaranteeing the differentiability of $\phi_i(\myvar{y})$. Depending on the problem at hand, other conditions may also be applicable.

\begin{thm} \label{thm:time-varying_cbf_qp_properties}
  Consider the centralized QP in \eqref{eq:CBF_problem}. Assume the LICQ holds for every local optimization problem in \eqref{eq:local_problem_qp} at every time instant. Let $\myvar{c}_i$ be the Lagrange multiplier of the local QP for agent $i$.
  Assume further that $\myvar{x}_{nom,i}(t), \myvar{a}_i^m(t), b_i^m(t)$ are slowly time-varying in the sense that there exists a constant $D>0$ such that $\| \myvar{d}_i^{\mathbf{I}_i}(t) \| \leq D $ for all $ i\in \myset{I} $ and all index sets $\mathbf{I}_i\subseteq \myset{M}$ with 
  \begin{equation} \label{eq:d_i_I}
      \myvar{d}_i^{\mathbf{I}_i}(t) = -\frac{d A_i^{\top,\mathbf{I}_i} A_i^{\top, \mathbf{I}_i,\top}}{dt} \myvar{c}_i^{\mathbf{I}_i} + \frac{d}{dt}(A_i^{\top,\mathbf{I}_i} \myvar{x}_{nom,i} + \myvar{b}_i^{\mathbf{I}_i} ).
  \end{equation}
 Denote $\underline{\lambda} = \min_{\mathbf{I}_i,i\in \myset{I}}\lambda_{\min}( ({A}_i^{\top,\mathbf{I}_i} {A}_i^{\top,\mathbf{I}_i,\top})^{-1})$, $\bar{\lambda} = \max_{\mathbf{I}_i,i\in \myset{I}}\lambda_{\max}( ({A}_i^{\top,\mathbf{I}_i} {A}_i^{\top,\mathbf{I}_i,\top})^{-1})$. If $k_0$ is chosen such that 
  \begin{equation} \label{eq:k_0}
      k_0 \underline{\lambda} - ND\sqrt{M}\bar{\lambda} \| L\| - \epsilon \geq 0
  \end{equation}
  for some $\epsilon>0$,  then the Filippov solution of the scaled normalized gradient flow in \eqref{eq:adaptive law finite time} converges to a critical point of $\phi(\myvar{y})$ in finite time $t_r\leq  \sum_{m\in \myset{M}}\| L\myvar{c}^m(0)\|_1/\epsilon$. Moreover, the coupling constraints in \eqref{eq:CBF_problem} are always satisfied.
\end{thm}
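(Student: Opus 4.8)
The plan is to certify finite-time convergence with a nonsmooth Lyapunov function equal to the $\ell_1$-norm of the gradient of $\phi$, and to show that the scaled normalized gradient flow drives this quantity down at a uniform rate $\epsilon$ despite the moving data. Throughout I work along a Filippov solution and estimate the decrease via the set-valued Lie derivative \cite{cortes2008discontinuous}. The constraint-satisfaction claim is immediate and independent of the convergence argument: for every $\myvar{y}$ each local QP \eqref{eq:local_problem_qp} is feasible by assumption, so Proposition~\ref{prop:constraint_satisfaction} guarantees that the stacked minimizer $[\myvar{x}_1;\dots;\myvar{x}_N]$ satisfies all coupling CBF conditions for all time.

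First I would make the multiplier dynamics explicit. Under LICQ the active set $\mathbf{I}_i$ of \eqref{eq:local_problem_qp} is locally constant, the inactive multipliers vanish, and the active ones solve the linear system $G_i \myvar{c}_i^{\mathbf{I}_i} = A_i^{\top,\mathbf{I}_i}\myvar{x}_{nom,i} + (I_M\otimes\myvar{l}_i)^{\mathbf{I}_i}\myvar{y} + \myvar{b}_i^{\mathbf{I}_i}$ obtained from Proposition~\ref{prop:local analytical solution}, where $G_i := A_i^{\top,\mathbf{I}_i}A_i^{\top,\mathbf{I}_i,\top}$ is invertible. Differentiating in $t$ and collecting the explicit time-variation into $\myvar{d}_i^{\mathbf{I}_i}$ as in \eqref{eq:d_i_I} yields $\dot{\myvar{c}}_i^{\mathbf{I}_i} = G_i^{-1}\myvar{d}_i^{\mathbf{I}_i} + G_i^{-1}(I_M\otimes\myvar{l}_i)^{\mathbf{I}_i}\dot{\myvar{y}}$, i.e. a ``disturbance'' term driven by the moving data and a ``control'' term driven by the update law. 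Recalling from Proposition~\ref{prop:subdifferential} that $\myvar{\zeta}^m = L\myvar{c}^m$ is the gradient block of $\phi$, I take $V = \sum_{m\in\myset{M}}\|L\myvar{c}^m\|_1 = \|\myvar{\zeta}\|_1$, so that $V(0)=\sum_m\|L\myvar{c}^m(0)\|_1$ matches the claimed time bound and $V=0$ characterizes a critical point $\myvar{0}\in\partial\phi$.

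Next I would evaluate the Lie derivative of $V$ along \eqref{eq:adaptive law finite time}. Substituting $\dot{\myvar{y}}=-k_0\,\textup{sign}(\myvar{\zeta})$ and using symmetry of $L$, the control contribution regroups by agent, since $\sum_m \textup{sign}(\myvar{\zeta}^m)^\top L\,\dot{\myvar{c}}^m|_{\mathrm{ctrl}}=\sum_i(-k_0)\myvar{u}_i^\top G_i^{-1}\myvar{u}_i\le -k_0\underline{\lambda}\sum_i\|\myvar{u}_i\|^2$, where $\myvar{u}_i := [\myvar{l}_i\,\textup{sign}(\myvar{\zeta}^m)]_{m\in\mathbf{I}_i}$; the disturbance contribution is bounded, using $\|G_i^{-1}\myvar{d}_i^{\mathbf{I}_i}\|\le\bar{\lambda}D$ and Cauchy--Schwarz over agents and constraints, by $ND\sqrt{M}\bar{\lambda}\|L\|$. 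The crux is the lower bound $\sum_i\|\myvar{u}_i\|^2\ge 1$ whenever $\myvar{\zeta}\neq\myvar{0}$: for a constraint $m$ with $\myvar{\zeta}^m\neq\myvar{0}$ I would use the identity $\sum_{i}c_i^m\,\myvar{l}_i\,\textup{sign}(\myvar{\zeta}^m)=(\myvar{\zeta}^m)^\top\textup{sign}(\myvar{\zeta}^m)=\|\myvar{\zeta}^m\|_1>0$; since $c_i^m=0$ for $m\notin\mathbf{I}_i$, some active index (with $m\in\mathbf{I}_i$) must have $\myvar{l}_i\,\textup{sign}(\myvar{\zeta}^m)\neq0$, and because this quantity is integer-valued its square is at least $1$. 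Combining the two contributions with the gain condition \eqref{eq:k_0} gives $\dot V\le -(k_0\underline{\lambda}-ND\sqrt{M}\bar{\lambda}\|L\|)\le-\epsilon$ for almost all $t$ while $V>0$, so $V(t)\le V(0)-\epsilon t$ reaches zero by $t_r\le\sum_m\|L\myvar{c}^m(0)\|_1/\epsilon$, establishing finite-time convergence to a critical point of $\phi$.

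The main obstacle is the rigorous nonsmooth bookkeeping rather than the algebra. Both the flow (through $\textup{sign}$) and $V$ (through $\|\cdot\|_1$) are nonsmooth, and the multiplier map $\myvar{c}(t)$ is only piecewise-$C^1$ because the active sets $\mathbf{I}_i$ jump; I must argue these jumps occur on a set of measure zero, justify the implicit-differentiation step on each smooth piece, and replace the formal derivative by the set-valued Lie derivative so the decrease estimate holds for almost every $t$ along the absolutely continuous Filippov solution. Care is also needed at points where some $\zeta_i^m=0$, where $\textup{sign}$ is set-valued; there I would verify that every selection in the Filippov map still yields $\max\widetilde{\mathcal L}V\le-\epsilon$, which the integrality argument above supports because it relies only on the active, nonzero blocks.
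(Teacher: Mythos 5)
Your proposal is correct and follows essentially the same route as the paper's proof: the same nonsmooth Lyapunov function $V=\sum_{m\in\myset{M}}\|L\myvar{c}^m\|_1$, the same implicit differentiation of the active-set KKT system that splits $\dot{\myvar{c}}_i$ into a $-k_0$-scaled control term and a disturbance term bounded via $D$, the same gain condition \eqref{eq:k_0} giving $\dot V\le-\epsilon$ for almost all $t$ and hence the bound $t_r\le\sum_{m\in\myset{M}}\|L\myvar{c}^m(0)\|_1/\epsilon$, and the same appeal to Proposition~\ref{prop:constraint_satisfaction} for violation-freeness. The one point where you deviate is in showing the decrease term is bounded away from zero: where the paper cites \cite[Proposition 2.1]{franceschelli2014finite} and then asserts ``without loss of generality'' that a component with $|\sigma_i^m|\ge 1$ sits on an \emph{active} agent--constraint pair, you derive this directly from the identity $\sum_i c_i^m\,\myvar{l}_i\,\mathrm{sign}(L\myvar{c}^m)=\|L\myvar{c}^m\|_1>0$ (symmetry of $L$) together with integrality of $\myvar{l}_i\,\mathrm{sign}(L\myvar{c}^m)$ --- a cleaner, self-contained justification of exactly the step the paper glosses over.
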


\begin{proof}
 Consider the nonsmooth Lyapunov function $$V(\myvar{y}) = \left\lVert \frac{\partial \phi(t,\myvar{y})}{\partial \myvar{y}} \right\lVert_1 = \| (I_M\otimes L) \myvar{c}_C \|_1 = \sum_{m\in \myset{M}}\| L\myvar{c}^m\|_1.$$  Denote at the time $t$ the index set of strictly active constraints of agent $i$ to be $\mathbf{I}_i(t)$ and its complement $\bar{\mathbf{I}}_i(t) = \myset{M}\setminus \mathbf{I}_i(t)$. 

 In the following, we will show that for any constant $\mathbf{I}_i(t), i\in \myset{I}$, $V(\myvar{y})$  decreases monotonically for a system $\Sigma_{\mathbf{I}_i, i\in \myset{I}}$ given by \eqref{eq:adaptive law finite time} together with the algebraic equation 
 \begin{equation} \label{eq:c_I given y}
\begin{aligned}
         {A}_i^{\top,\mathbf{I}_i} {A}_i^{\top,\mathbf{I}_i,\top}\myvar{c}_i^{\mathbf{I}_i} & = {A}_i^{\top,\mathbf{I}_i} \myvar{x}_{nom,i}+ ({I}^{\mathbf{I}_i} \otimes \myvar{l}_i)\myvar{y}+\myvar{b}_i^{\mathbf{I}_i}, \\
         \myvar{c}_i^{\bar{\mathbf{I}}_i} &= \myvar{0}.
\end{aligned}
\end{equation}
for all $ i\in \myset{I}$. This equation is obtained by considering only the constraints indexed in $\mathbf{I}_i$ in \eqref{eq:local_C} and noting $\myvar{c}_i^{\mathbf{I}_i}> \myvar{0}$. When viewing $\mathbf{I}_i, i\in \myset{I}$ as the index of such systems, since $V(\myvar{y})$  decreases monotonically for all $\Sigma_{\mathbf{I}_i, i\in \myset{I}}$ as will be shown later,  $V(\myvar{y})$ effectively acts as a common Lyapunov function \cite{liberzon2003switching}. 
 Thus it suffices to consider a system $\Sigma_{\mathbf{I}_i, i\in \myset{I}}$, and the time index is dropped to ease the notation.

For arbitrary constant $\mathbf{I}_i, i\in \myset{I}$, from \eqref{eq:c_I given y}, we know 
     \begin{equation*} 
    \begin{aligned}
       &  d \myvar{c}_i^{\mathbf{I}_i}/d t = ({A}_i^{\top,\mathbf{I}_i} {A}_i^{\top,\mathbf{I}_i,\top})^{-1}({I}^{\mathbf{I}_i} \otimes \myvar{l}_i d \myvar{y}/dt  + d_i^{\mathbf{I}_i}) \\
        & d \myvar{c}_i^{\bar{\mathbf{I}}_i}/d t = \myvar{0}
    \end{aligned}
\end{equation*}
Here $d_i^{\mathbf{I}_i}$ is defined in \eqref{eq:d_i_I}. ${A}_i^{\top,\mathbf{I}_i} {A}_i^{\top,\mathbf{I}_i,\top}$ is invertible since LICQ is assumed. Denote by $\Gamma_i^\prime =                  \begin{pmatrix}
            ({A}_i^{\top,\mathbf{I}_i} {A}_i^{\top,\mathbf{I}_i,\top})^{-1} & 0 \\
            0 & 0
        \end{pmatrix} $ and  $J_{\mathbf{I}_i}$ the permutation matrix such that $[\myvar{c}_i^{\mathbf{I}_i}; \myvar{c}_i^{\bar{\mathbf{I}}_i} ] = J_{\mathbf{I}_i} \myvar{c}_i $. Thus $[I^{\mathbf{I}_i}; I^{\bar{\mathbf{I}}_i}] = J_{\mathbf{I}_i}  I_M$. We can write the time derivative of $\myvar{c}_i$ in a compact form as
        \begin{equation} \label{eq:dc_I_i / dy time-varying}
        \begin{aligned}
               \dot{\myvar{c}}_i & = J_{\mathbf{I}_i }^\top [d \myvar{c}_i^{\mathbf{I}_i}/d t; d \myvar{c}_i^{\bar{\mathbf{I}}_i}/d t] \\
               & = J_{\mathbf{I}_i }^\top (\Gamma_i^\prime)J_{\mathbf{I}_i} ( I_M\otimes\myvar{l}_i \dot{\myvar{y}} + J_{\mathbf{I}_i}^\top [\myvar{d}_i^{\mathbf{I}_i}; \myvar{0}]) \\
               & = \Gamma_i(I_M\otimes\myvar{l}_i \dot{\myvar{y}} + \myvar{d}_i^{\mathbf{I}_i,\prime })
        \end{aligned}
        \end{equation}
where $\Gamma_i = J_{\mathbf{I}_i}^\top \Gamma_i^\prime J_{\mathbf{I}_i}, \myvar{d}_i^{\mathbf{I}_i,\prime }:= J_{\mathbf{I}_i}^\top [\myvar{d}_i^{\mathbf{I}_i}; \myvar{0}]$.

Let $\myvar{z} = I_M\otimes L \myvar{c}_C$.  Then $V(\myvar{y}) = \| \myvar{z}\|_1$. The generalized time derivative is given by
\begin{equation} \label{eq:nonsmooth_V_dot}
\begin{aligned}
        \frac{d}{dt}{V}(\myvar{y}) & \in \sum_{i\in \myset{I}^{\neq }} \sign{(z_i)} \dot{z}_i + \sum_{i\in \myset{I}^{=}} \text{SIGN}{(z_i)} \dot{z}_i 
\end{aligned}
\end{equation}
 where $\myset{I}^= = \{i\in \myset{I}:z_i = 0 \}$, $\myset{I}^{\neq} = \{i\in \myset{I}:z_i \neq 0 \}$, $\textup{SIGN}(z)  :=\left\{ \begin{smallmatrix} 1, &z >0; \\
[-1,1], &z = 0;\\
-1, &z<0.\end{smallmatrix}\right.$
Since we are dealing with Filippov solutions, we can disregard the case which $z_i = 0$ holds for isolated time instants of measure zero. If $z_i = 0$ holds along an interval of time of positive measure, then, in the sense of Filippov, $\dot{z}_i$ exists at those time instants and $\dot{z}_i = 0$. Based on this fact,  we have that for almost all $t$, $    \frac{d}{dt} V(\myvar{z})= \sum_{i\in \myset{I}^{\neq}} \sign(z_i)\dot{z}_i + \sum_{i\in \myset{I}^{=}} \textup{sign}(z_i)\dot{z}_i$, that is,

\begin{equation} \label{eq:nonsmooth_V_dot_time-varying}
\begin{aligned}
        \frac{d}{dt}{V}(\myvar{y}) 
        & = \sign{\myvar{z}}^\top  \dot{\myvar{z}} = \sign{\myvar{z}}^\top (I_M\otimes L) J \dot{\myvar{c}}_A  
\end{aligned}
\end{equation}

Define $\myvar{\sigma} =  (I_M\otimes L) \sign{\myvar{z}}$. After some calculations, one finds $\myvar{\sigma} = [ L\sign{L \myvar{c}^1};  L\sign{L \myvar{c}^2}; ...;  L\sign{L \myvar{c}^M} ]$. Denote by $\sigma_{i}^m = [L\sign{L \myvar{c}^m}]_i $. Then $\myvar{\sigma}$ can be seen as $ \{\sigma_{i}^m\}_{i\in \myset{I}, m\in \myset{M}}$ ordered by constraints. Let $\myvar{\sigma}_i = (\sigma_{i}^1, \sigma_{i}^2,..., \sigma_{i}^M)$, and $\myvar{\sigma}_A = J^\top \myvar{\sigma} = [\myvar{\sigma}_1; \myvar{\sigma}_2; ...; \myvar{\sigma}_N]$ be the vector of $ \{\sigma_{i}^m\}_{i\in \myset{I}, m\in \myset{M}}$ ordered by agents. From Lemma~\ref{lem:rearrangement}, we have 
\begin{equation} \label{eq:sigma_A}    
\myvar{\sigma}_A = [I_M\otimes \myvar{l}_1;I_M\otimes \myvar{l}_2;...;I_M\otimes \myvar{l}_N] \sign{\myvar{z}}.
\end{equation}
Furthermore, from \eqref{eq:adaptive law finite time}, $\dot{\myvar{y}}_A = -k_0 \sign{}([I_M\otimes \myvar{l}_1;I_M\otimes \myvar{l}_2;...;I_M\otimes \myvar{l}_N] \myvar{c}_C )$, thus $\dot{\myvar{y}} = -k_0 \sign{(I_M\otimes L\myvar{c}_C)} = - k_0 \sign{(\myvar{z})}$ in view of Lemma~\ref{lem:rearrangement} and the definition of $\myvar{z}$. Further considering \eqref{eq:sigma_A}, we have
$[I_M\otimes \myvar{l}_1;I_M\otimes \myvar{l}_2;...;I_M\otimes \myvar{l}_N] \dot{\myvar{y}} = -k_0 \myvar{\sigma}_A. $ Following \eqref{eq:dc_I_i / dy time-varying}, we thus obtain
\begin{multline}
        \dot{\myvar{c}}_A = [\dot{\myvar{c}}_1; \dot{\myvar{c}}_2; ...; \dot{\myvar{c}}_N] \\ =\text{blkdiag}(\Gamma_1, ..., \Gamma_N) (-k_0 \myvar{\sigma}_A + [\myvar{d}_1^{\mathbf{I}_1,\prime};...; \myvar{d}_N^{\mathbf{I}_N,\prime}]) 
\end{multline}

From \eqref{eq:nonsmooth_V_dot_time-varying}, the generalized time derivative is
\begin{equation} \label{eq:nonsmooth_V_dot_time-varying2}
\begin{aligned}
        \frac{d}{dt}{V}(\myvar{y}) 
        & =   \myvar{\sigma}_A^\top \dot{\myvar{c}}_A \\
        & = \sum_{i\in \myset{I}} \myvar{\sigma}_i^\top \Gamma_i (-k_0 \myvar{\sigma}_i  + \myvar{d}_i^{\mathbf{I}_i,\prime}) \\
\end{aligned}
\end{equation}

 Consider the following two cases: 
\begin{enumerate}
    \item $L\myvar{c}^m= \myvar{0}$ for all $m\in \myset{M}$. In this case, $V(\myvar{y}) =0$, and thus from the first-order optimality condition one critical point of $\phi(\myvar{y})$ has been reached.
    \item $L\myvar{c}^m\neq \myvar{0}$ for some $m \in \myset{M}$. This implies that $\| L\sign{L\myvar{c}^m}\| \geq 1$ \cite[Proposition 2.1]{franceschelli2014finite}. Without loss of generality, assume that $| \sigma_1^1 |\geq 1$  and   $\mathbf{I}_1 = \{1,2,.., M'\}$.  These assumptions are not restrictive as they can be fulfilled by permuting the numbering of the agents and constraints. This leads to
    \begin{equation}
        \Gamma_1 = \begin{pmatrix}
            ({A}_1^{\top,\mathbf{I}_1} {A}_1^{\top,\mathbf{I}_1,\top})^{-1} & 0 \\
            0 & 0
        \end{pmatrix} 
    \end{equation}
    and $\| \myvar{\sigma}_1^{\mathbf{I}_i} \|\geq 1$. 
    The generalized time derivative becomes
    \begin{equation} \label{eq:nonsmooth_V_dot_time-varying3}
    \begin{aligned}
            \frac{d}{dt}{V}(\myvar{y}) 
            & \leq -k_0 \myvar{\sigma}_1^\top \Gamma_1 \myvar{\sigma}_1 + \sum_{i\in \myset{I}} \myvar{\sigma}_i^\top \Gamma_i \myvar{d}_i^{\mathbf{I}_i,\prime} \\
            & \leq  -k_0 \lambda_{\min}( ({A}_1^{\top,\mathbf{I}_1} {A}_1^{\top,\mathbf{I}_1,\top})^{-1}) \\
         & \hspace{2cm} + \sum_{i\in \myset{I}} \|  \myvar{\sigma}_i \| \| \Gamma_i \| \| \myvar{d}_i^{\mathbf{I}_i,\prime}\|
    \end{aligned}
    \end{equation}

    for almost all $t$. The first inequality comes from the positive semi-definiteness of $\Gamma_i$, and the second inequality follows from the positive definiteness of $({A}_1^{\top,\mathbf{I}_1} {A}_1^{\top,\mathbf{I}_1,\top})^{-1}$ and $\| \myvar{\sigma}_1^{\mathbf{I}_1} \|\geq 1$. Note that $ \myvar{\sigma}_i \in \mathbb{R}^M, [\myvar{\sigma}_i]_m = [L\sign{L \myvar{c}^m}]_i $, then $\| \myvar{\sigma}_i\| \leq \sqrt{M}\| L\| $, $\| \myvar{d}_i^{\mathbf{I}_i,\prime}\| = \| d_i^{\mathbf{I}_i} \|\leq D$. Thus, from \eqref{eq:k_0}, 
\begin{equation}
    \frac{d}{dt}V(\myvar{y})  \leq - k_0 \underline{\lambda} + ND\sqrt{M}\bar{\lambda} \| L\| \leq -\epsilon
\end{equation}
\end{enumerate}

Based on these discussions, we know $\frac{d}{dt}V \leq -\epsilon$ for almost all $t$ as long as $L\myvar{c}^m\neq \myvar{0}$ for some $m \in \myset{M}$ for arbitrary systems $\Sigma_{\mathbf{I}_i, i\in \myset{I}}$. Hence,  leveraging the theory of common Lyapunov functions,
it gives a trivial upper estimate of the convergence time as $\sum_{m\in \myset{M}}\| L\myvar{c}^m(0)\|_1/\epsilon$.   From Proposition~\ref{prop:constraint_satisfaction}, we further conclude that the coupling constraints are fulfilled for all time.
\end{proof}

Theorem~\ref{thm:time-varying_cbf_qp_properties} shows that the solution optimal to the slowly time-varying quadratic program \eqref{eq:CBF_problem} is obtained in a violation-free, finite-time, and distributed manner. These properties are favorable for implementing the CBF-induced quadratic programs as in \eqref{eq:CBF_problem} in a safe,  (point-wise) optimal, and distributed way.

One could easily recognize that the update law in \eqref{eq:adaptive law finite time} and the analysis in Theorem~\ref{thm:time-varying_cbf_qp_properties} are similar to those in \cite{tan2021distributed}, where a simple case with a single coupling constraint is considered. We note that this extension is not trivial due to the nonlinear relations between $\myvar{c}_i $ and $\myvar{y}$ in \eqref{eq:local_C}. Compared to \cite{tan2021distributed}, here we distinguish active constraints from inactive ones in order to obtain an analytical relation between changes in $\myvar{c}_i $ and $\myvar{y}$, and show that the Lyapunov function monotonically decreases for any active constraints, i.e., it is a common Lyapunov function.

\section{Simulations} \label{sec:simulation}
In this section, we demonstrate the effectiveness of the proposed algorithms in a static optimization problem and a multi-agent safe coordination application. All the simulations are conducted in Matlab R2022b on an Intel Core i7-1365U Windows laptop, and the primal-dual pairs are solved by the built-in \texttt{quadprog} function with default parameters. We apply forward Euler integration for simulating the continuous-time dynamics with a sampling time $0.01$s. 

\subsection{A static online optimization problem}
In this subsection, we consider an optimization problem that involves $N = 9$ agents indexed in $\myset{I}=\{1,2,..,9\}$ with the communication graph shown in Fig.~\ref{fig:graph}. Here each agent $i$ has $6$ decision variables $\myvar{x}_i = (x_{i,1},...,x_{i,6})$, where $(x_{i,1},x_{i,2},x_{i,3}) $ represents the demands of $3$  products, and $(x_{i,4},x_{i,5},x_{i,6}) $ the supplies of $3$ resources.  The optimization problem is given by
\begin{equation} \label{eq:static_program_example}
    \begin{split}
        &\min_{\myvar{x}_1,\dots,\myvar{x}_9} \sum_{i\in\myset{I}} (\myvar{x}_i^\top Q \myvar{x}_i  + \textstyle \sum_{j= 4,5,6} h_{i,j} {x}_{i,j} ) \\
        & {s.t.} \quad \left\{
        \begin{array}{cc}
          \sum_{i\in\myset{I}} 2x_{i,2}+x_{i,3} - x_{i,4}  \leq 0, \\
            \sum_{i\in\myset{I}} 2x_{i,1}+x_{i,3} - x_{i,5}  \leq 0, \\
             \sum_{i\in\myset{I}} x_{i,1}+x_{i,2} - x_{i,6}  \leq 0, \\
             x_{i,1}\geq h_{i,1}, x_{i,2}\geq h_{i,2}, x_{i,1}\geq h_{i,3}, \forall i\in \myset{I}
        \end{array}
        \right.
    \end{split}
\end{equation}
where $h_{i,j} = \textup{ceil}(10\sin(ij)+20), i\in \myset{I}, j = 1,2,...,6, Q$ is symmetric and $\myvar{x}_i^\top Q \myvar{x}_i = (2x_{i,2}+x_{i,3} - x_{i,4} )^2 + (2x_{i,1}+x_{i,3} - x_{i,5})^2 + (x_{i,1}+x_{i,2} - x_{i,6})^2 $. The first three inequality constraints represent that the total supply for the resources needs to exceed the total demand  of producing $3$ different products; the last three inequality constraints represent the local production output requirements. In the cost function, we penalize the derivation between the local supply and local demand, which can be interpreted as the cost of transporting or storing  the deficient or surplus resources, and take into account the local cost of obtaining those resources. We assume that the local production output requirement $h_{i,j}, j = 1,2,3$ and the local cost coefficient $h_{i,j},j= 4,5,6$ are only known to agent $i$.

It is clear that the local cost function $f_i(\myvar{x}_i) = \myvar{x}_i^\top Q \myvar{x}_i  + \textstyle \sum_{j= 4,5,6} h_{i,j} {x}_{i,j} $ is convex but not strongly convex, and the local coefficient matrix per the definition in \eqref{eq:local_problem} $$A_i^\top  = \begin{psmallmatrix}
     0   &   2 &    1  &   -1   &  0  &   0 \\
     2   &   0 &     1 &     0 &    -1 &      0 \\
     1  &    1   &   0  &    0  &    0  &  -1 \\
     -1   &  0   &  0  &   0  &   0   &  0 \\
     0  &   -1   &  0   &  0   &  0   &  0 \\
     0   &  0  &   -1   &  0  &   0   &  0
\end{psmallmatrix}$$ is of full rank. This fulfills Assumption~\ref{ass:local_slater} on Slater's condition.

\begin{figure}
    \centering
    \includegraphics{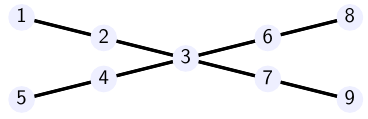}
    \caption{Communication graph}
    \label{fig:graph}
\end{figure}

\begin{figure*}[ht]
	\centering
	\begin{subfigure}[t]{0.31\linewidth}
		\includegraphics[width=\linewidth]{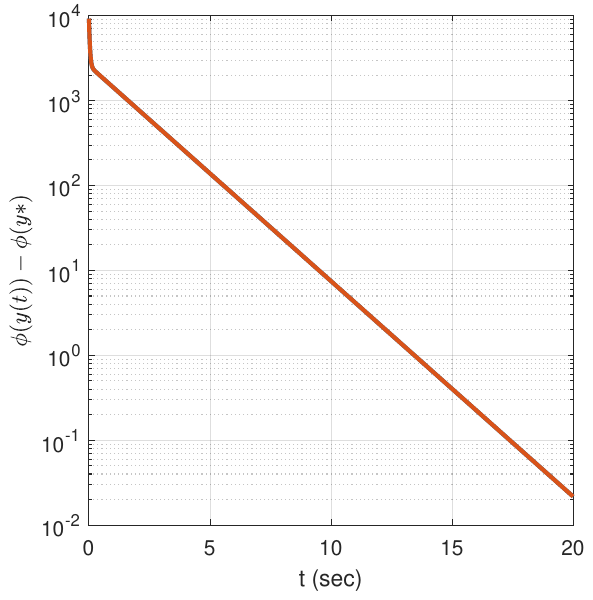}
		\caption{   Time evolution of the total cost minus the optimal cost. } 
	\end{subfigure}
	\begin{subfigure}[t]{0.31\linewidth}
		\centering\includegraphics[width=\linewidth]{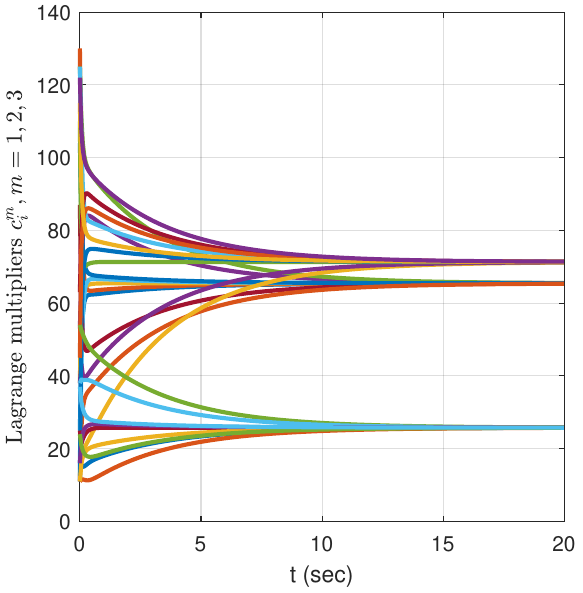}
		\caption{  Time evolution of the local Lagrange multipliers corresponding to the three coupling constraints}
	\end{subfigure}
	\begin{subfigure}[t]{0.31\linewidth}
		\centering\includegraphics[width=\linewidth]{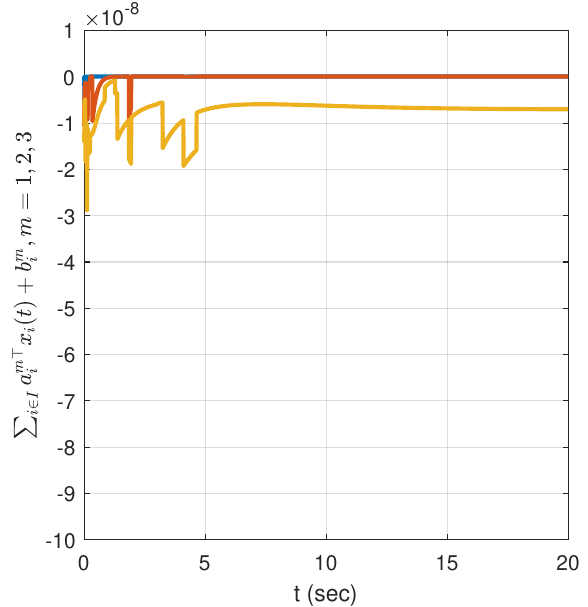}
		\caption{ Time evolution of the value of the first three coupling constraints }
	\end{subfigure}
	\setlength{\belowcaptionskip}{-10pt}
	\caption{  Numerical results involving $9$ agents solving a static optimization problem. }
	\label{fig:case1_result}

    \end{figure*}
    
From the structure of the problem, since  the last three constraints only involve its own decision variable, the algorithms for the sparse case apply here and that the related $ \myvar{ y }^m$ equals zero. Our proposed approach introduces $27$ auxiliary scalar variables, with each node needing to communicate $6$ scalar variables with its neighbors. In comparison, the primal decomposition approach in~ \cite{notarnicola2019constraint} requires $48$ auxiliary scalar variables and $24$ scalar variables to be communicated by node~$3$. By choosing $y_i^m(0) = 0, \forall i\in \myset{I},m\in \myset{M}$, $k_0 = 1$ with sampling-time $ 0.01$s, we obtain the results shown in Fig.~\ref{fig:case1_result}. One observes that 1) the total cost error is monotonically decreasing; 2) the local Lagrange multipliers corresponding to each coupling constraint converge to a common value; and 3) the three coupling constraints are always fulfilled during the solution iterations. This violation-free behavior could be desirable in this scenario because one does not need to wait for the convergence of the optimization algorithm before applying the solutions, and the intermediate solution, though suboptimal, still guarantees that the supply-demand requirement is respected.

\begin{figure*}[ht]
	\centering
	\begin{subfigure}[t]{0.48\linewidth}
		\includegraphics[width=\linewidth]{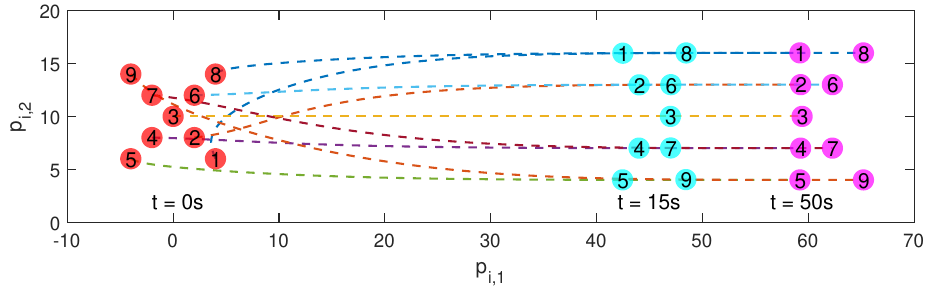}
		\caption{   Nominal case } 
	\end{subfigure}
	\begin{subfigure}[t]{0.48\linewidth}
		\centering\includegraphics[width=\linewidth]{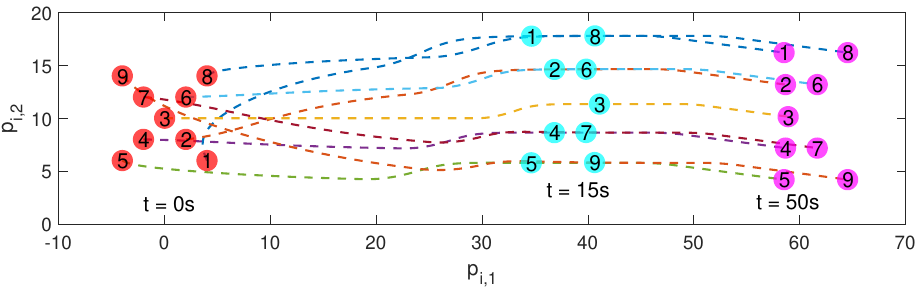}
		\caption{  Centralized Case}
	\end{subfigure}
	\begin{subfigure}[t]{0.48\linewidth}
		\centering\includegraphics[width=\linewidth]{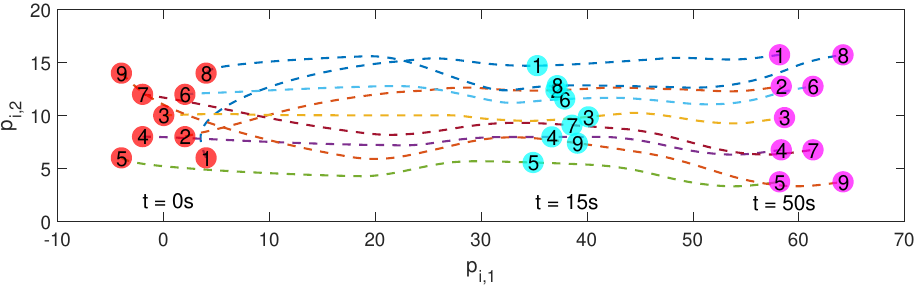}
		\caption{ Distributed case }
	\end{subfigure}
        \begin{subfigure}[t]{0.48\linewidth}
		\centering\includegraphics[width=\linewidth]{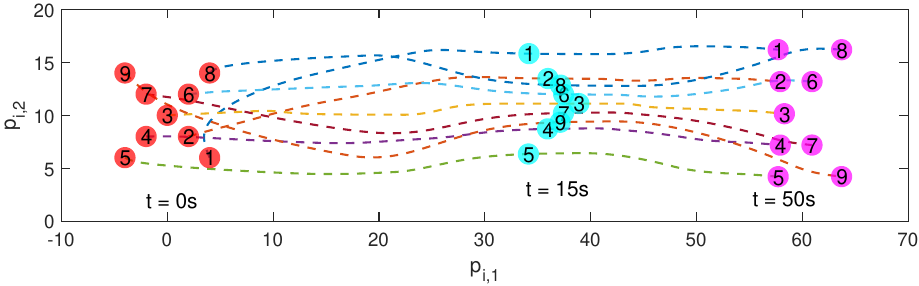}
		\caption{ Naive distributed case }
	\end{subfigure}
	\setlength{\belowcaptionskip}{-10pt}
	\caption{  Safe distributed control for coordinating $9$ agents. }
	\label{fig:case2_result}

    \end{figure*}

\begin{figure*}[ht]
	\centering
	\begin{subfigure}[t]{0.24\linewidth}
		\includegraphics[width=\linewidth]{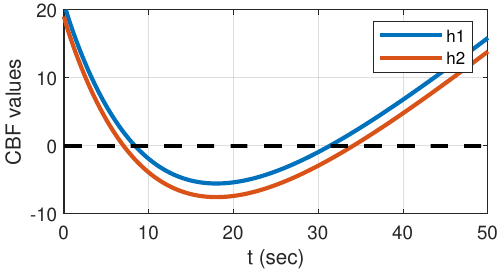}
		\caption{   Nominal case } 
	\end{subfigure}
	\begin{subfigure}[t]{0.24\linewidth}
		\centering\includegraphics[width=\linewidth]{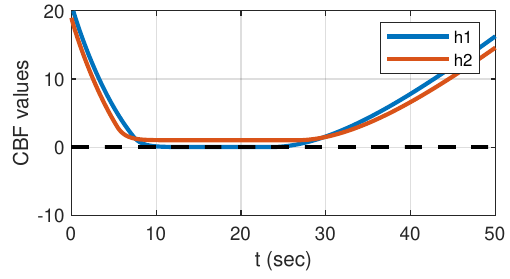}
		\caption{  Centralized case}
	\end{subfigure}
	\begin{subfigure}[t]{0.24\linewidth}
		\centering\includegraphics[width=\linewidth]{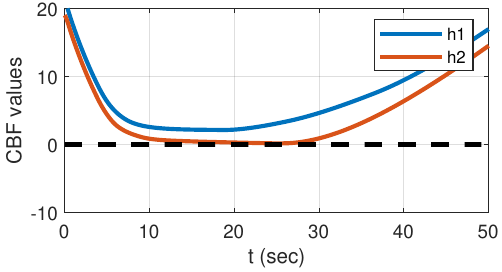}
		\caption{ Distributed case }
	\end{subfigure}
        \begin{subfigure}[t]{0.24\linewidth}
		\centering\includegraphics[width=\linewidth]{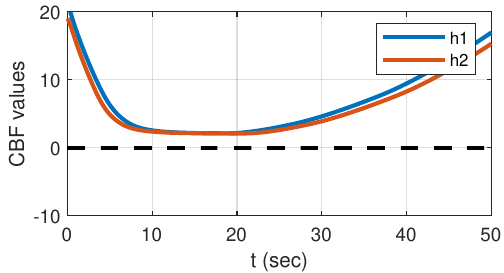}
		\caption{ Naive distributed case }
	\end{subfigure}
	\setlength{\belowcaptionskip}{-10pt}
	\caption{  Time evolution of the CBF values when coordinating $9$ agents. }
	\label{fig:case2_cbf}

    \end{figure*}

    \begin{figure}
        \centering
        \includegraphics[width=0.8\linewidth]{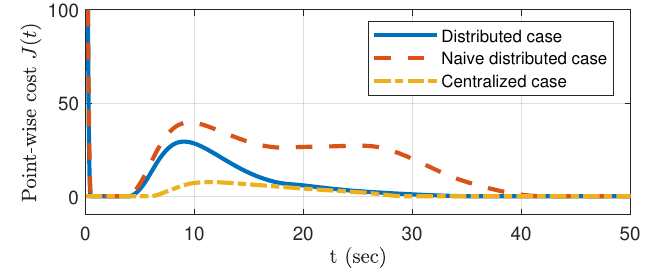}
        \caption{Time evolution of the  cost $\sum_{i\in \myset{I}}  \frac{1}{2}\| \myvar{x}_i -\myvar{x}_{nom,i} \|^2 $ along the state trajectory when the proposed algorithm is applied. }
        \label{fig:case2_cost}
    \end{figure}

\subsection{An application to safe distributed control for MAS}
In the second example, we show one  application of the proposed algorithm for safe distributed control of a multi-agent system. Denote by $\myvar{p}_i = (p_{i,1}, p_{i,2})\in \mathbb{R}^2$ the state  and $\myvar{p}_{i,d} \in \mathbb{R}^2$ the desired state of agent $i$. We assume that the dynamics of each agent is given by $\dot{\myvar{p}}_i = \myvar{x}_i$, where $\myvar{x}_i$ is the velocity command to agent $i$.  The coordination task is to transform the formation of the multi-agent system  from the initial formation $\myvar{p}_{\text{initial}} =  [\myvar{p}_{1}(0); \myvar{p}_{2}(0);...;\myvar{p}_{9}(0)] =  (4, 6, 2, 8, 0, 10, -2, 8, -4, 6, 2, 12, -2, 12, 4, 14, -4, 14)$ (shown in red in Fig.~\ref{fig:case2_result}) to the target formation $\myvar{p}_{\text{final}} = [\myvar{p}_{1,d}; \myvar{p}_{2,d};...;\myvar{p}_{9,d}]=(60, 16, 60, 13, 60, 10, 60, 7, $ $  60, 4, 63, 13, 63, 7, 66, 16, 66, 4)$. The communication graph is in Fig.~\ref{fig:graph}. Assume that agent $i, i\in \myset{I}$ has access to the desired relative position $\myvar{p}_{ij,d} = \myvar{p}_{j,d} - \myvar{p}_{i,d}, j\in N_i $ and its absolute position $\myvar{p}_i$, and agent $3$, referred to as the leader, knows its target position $\myvar{p}_{3,d}$. An intuitive  distributed protocol for fulfilling this task is given by
 \begin{equation*}
     \myvar{x}_{nom,i}(\myvar{p}_i, \{\myvar{p}_j\}_{j\in N_i}) = \sum_{j\in N_i } \myvar{p}_{ij,d} - \myvar{p}_{ij}  + \delta_i (\myvar{p}_{i,d} - \myvar{p}_{i}),
 \end{equation*}
 where $\myvar{p}_{ij} = \myvar{p}_{j} - \myvar{p}_{i} $, 
 $\delta_i = 1$ if agent $i$ is the leader and $\delta_i = 0$ otherwise.

Suppose that the multi-agent team needs to satisfy $2$ coupling state constraints during the formation transition, given by 
\begin{subequations}
 \begin{align}
        \hspace{-3mm}  h_{1}(t,\myvar{p}_1,...,\myvar{p}_9) &= 30 + t - 0.1\sum_{i\in \myset{I}} (p_{i,1} + p_{i,2}) \geq 0 , \label{eq:mas_cbf_1}\\
         \hspace{-3mm}  h_{2}(t,\myvar{p}_1,...,\myvar{p}_9) &= 10 + t - 0.1\sum_{i\in \myset{I}} ( p_{i,1} - p_{i,2}) \geq 0. \label{eq:mas_cbf_2}
    \end{align}
\end{subequations}
These two inequalities impose time-varying upper bounds on the collective summation of all coordinates as in \eqref{eq:mas_cbf_1} and the collective difference between the first and the second coordinates as in \eqref{eq:mas_cbf_2}, respectively. When viewing the two coordinates of $\myvar{p}_i$ as two different local sources and the overall task as transitioning from one resource allocation scheme to another, one may interpret the first constraint as a gradual relaxation on the source limit, and the second one as the gradual relaxation on the difference limit between the resources.

Following the control barrier function-induced control design procedures \cite{Ames2017}, the centralized safety-assuring controller is given by 
\begin{equation} \label{eq:mas_cbf_centralized}
    \begin{aligned}
        & \min_{\myvar{x}_1,\myvar{x}_2, ..., \myvar{x}_9} \sum_{i\in \myset{I}}  \frac{1}{2}\| \myvar{x}_i -\myvar{x}_{nom,i} \|^2  \\ 
        \textup{ s. t. } 
        & \sum_{i\in \myset{I}}[ 0.1 \ 0.1] \myvar{x}_i -  h_1(t,\myvar{p}_1,...,\myvar{p}_9) -1\leq 0, \\
        & \sum_{i\in \myset{I}}[ 0.1 \ -0.1] \myvar{x}_i -  h_2(t,\myvar{p}_1,...,\myvar{p}_9) -1\leq 0.
    \end{aligned}
\end{equation}
Here we choose the extended class $\mathcal{K}$ function to be an identity function for simplicity. It is clear that \eqref{eq:mas_cbf_centralized} is a special case of \eqref{eq:CBF_problem} with $2$ coupling inequality constraints.  Denote the corresponding auxiliary variables by $\myvar{y}^1, \myvar{y}^2$. Based on previous analysis, the local quadratic program-based controller is given by 
\begin{equation} \label{eq:mas_cbf_local}
    \begin{aligned}
        & \hspace{15mm}\min_{\myvar{x}_i} \frac{1}{2}\| \myvar{x}_i -\myvar{x}_{nom,i} \|^2  \\ 
        \textup{ s. t. } & [ 0.1 \ 0.1] \myvar{x}_i  + p_{i,1} + p_{i,2} + \myvar{l}_i\myvar{y}^{1} -  31/9 \leq 0,    \\
        & [ 0.1 \ -0.1] \myvar{x}_i   +p_{i,1} - p_{i,2} + \myvar{l}_i\myvar{y}^{2} -  11/9 \leq 0.
    \end{aligned}
\end{equation}
where $\myvar{l}_i$ is the $i$-th row of the Laplacian matrix.  These auxiliary variables are updated locally according to \eqref{eq:adaptive law finite time}. In this case we know $A_i^\top = 0.1\begin{psmallmatrix}
    1 & 1\\
    1 & -1
    \end{psmallmatrix}$ is of full column rank and the LICQ of the local optimization problem is always fulfilled. Moreover, recall that $\myvar{x}_{nom,i}(\myvar{p}(t))$ and $\myvar{y}^m(t), m= 1,2$ are locally Lipschitz in time, we know the control input from this local optimization problem is also locally Lipschitz in  time  \cite[Theorem 3.1]{hager1979lipschitz}. Together with the violation-free property from Theorem~\ref{thm:time-varying_cbf_qp_properties}, our proposed algorithm guarantees system safety.

In Fig.~\ref{fig:case2_result} we present the trajectories of four different cases: 1) the nominal case, where the controller $\myvar{x}_{nom,i}$ is applied, 2) the centralized case, where we assume a central node exists that computes the safe controller from \eqref{eq:mas_cbf_centralized} and sends the control signal to each agent; 3) the distributed case, where our proposed safe controller in \eqref{eq:mas_cbf_local} is implemented; 4) the naive distributed case, where the controller from \eqref{eq:mas_cbf_local} is used with all $\myvar{y}^{1},\myvar{y}^2$ zero. The last case is often used in CBF literature when system designers focus on guaranteeing safety and would sacrifice point-wise optimality.

In the simulation, we choose $k_0 = 1$ and replace $\text{sign}(\cdot)$ with a saturation function $\text{sat}(v) =  1, \text{if } v >0.1;
    10v, \text{if } -0.1\leq v \leq0.1;
    -1, \text{if } v <-0.1 $ for eliminating the numerical chattering. One observes that the formation task is fulfilled by all four control schemes. In Fig.~\ref{fig:case2_cbf}, it becomes clear that the two coupling state constraints are violated in the  nominal case, and respected in the other three cases. Moreover, the distributed case with the proposed algorithm shows to be less conservative compared to the naive distributed scheme as the trajectory approaches closer to the safety boundary. To better quantify the performance of the proposed algorithm, we further compare the point-wise cost $\sum_{i\in \myset{I}}  \frac{1}{2}\| \myvar{x}_i -\myvar{x}_{nom,i} \|^2 $ in Fig.~\ref{fig:case2_cost} along the state trajectory when our proposed algorithm is applied, i.e., the trajectory in Fig.~\ref{fig:case2_result}(c). At any time $t$, one could view the curves corresponding to the naive distributed case and  the centralized case in Fig.~\ref{fig:case2_cost} as the initial cost and the optimal cost of the frozen-time optimization problem, respectively. Compared to the naive implementation, our proposed algorithm achieves safe distributed control with much less point-wise cost.

One key prerequisite that can restrict applicability of the proposed approach is Assumption~\ref{ass:local_slater}. Even though we provide sufficient conditions for it to hold in dense and sparse cases, it can still be limiting. We will explore how to online monitor local feasibility and adapt the local parameter $y_i$ with feasibility guarantees in our future work. 

\section{Conclusion}

In this work, we propose a continuous-time violation-free, distributed algorithm for constraint-coupled optimization problems and demonstrate its applicability for control barrier function (CBF)-induced safe distributed control design. To decompose the global constraint coupled optimization problem into equivalent local ones, auxiliary variables are introduced,  whose differences indicate the allocation of the constraints among all agents.   Moreover, the  subgradient information with respect to the auxiliary variables can be obtained using local information, which leads to  a subgradient algorithm-like update law  design with asymptotic optimality and all-time constraint satisfaction guarantee.
For safe distributed control design, we further strengthen our algorithms to achieve finite-time convergence to cope with slowly time-varying parameters, leading to a safety-assured and point-wise optimal controller.
Two numerical examples are provided to illustrate the favorable properties of the proposed algorithms: a static resource allocation optimization example and a multi-agent coordination example with two coupling state constraints.

\section*{Appendix}
\begin{proof}[Proof of Proposition~\ref{prop:equivalent_problem}]
     To prove Point 1),  given any feasible solution $(\myvar{x}^{\prime},\myvar{y}^{\prime})$ to \eqref{eq:reformulated_problem}, for an arbitrary $m$-th constraint  in \eqref{eq:centralized_problem},
    we know $\myvar{1}^\top (A^{m \top} \myvar{x}^\prime + L\myvar{y}^{\prime m} + \myvar{b}^m ) = \sum_{i\in\myset{I}} {\myvar{a}_i^{m \top}}\myvar{x}_i^\prime  + b_i^m  \leq 0  $ thanks to the fact that $\myvar{1}^\top L = \myvar{0}$ for $L$ being the Laplacian matrix of any connected and undirected graph $\mathcal{G}$ \cite{mesbahi2010graph}. Thus $\myvar{x}^{\prime}$ also satisfies the $m$-th constraint in \eqref{eq:centralized_problem}. 
    
    For Point 2), given any feasible solution $\myvar{x}^\prime$ to \eqref{eq:centralized_problem}, for an arbitrary $m$-th constraint in \eqref{eq:centralized_problem}, define $\myvar{v}^{m} = A^{m \top}\myvar{x}^\prime + \myvar{b}^{m}$, $\myvar{w}^{m} = \frac{\myvar{1}_N^\top \myvar{v}^{m}}{N}  \myvar{1}_N$. Thus, $\myvar{w}^{m} \le \myvar{0}$ is obtained from the fact that $\myvar{w}^{m} $ has the same value in each entry and $\myvar{1}_N^\top \myvar{w}^{m} = \myvar{1}_N^\top \myvar{v}^{m} = \sum_{i\in\myset{I}} {\myvar{a}_i^{m \top}}\myvar{x}_i^\prime  + b_i^m  \leq 0 $. We further know that there exists a $\myvar{y}^m\in \mathbb{R}^N$ such that $L\myvar{y}^m + \myvar{v}^m = \myvar{w}^m$ thanks to the fact that $  \text{Range}(L) = \{ \myvar{z}\in \mathbb{R}^N: \myvar{1}_N^\top \myvar{z} = 0\}$ for a connected undirected graph \cite{mesbahi2010graph}. Thus, such a $(\myvar{x}^{\prime},\myvar{y}^1,  ..., \myvar{y}^M)$ is also feasible to the problem in \eqref{eq:reformulated_problem}.   
    
    Point 3) is obvious in view of the two problems. Based on the previous analysis, \eqref{eq:centralized_problem} and \eqref{eq:reformulated_problem} share the same set of feasible solutions with respect to $\myvar{x}$. Noting that they also share the same cost function, we conclude that the two problems are equivalent. 
\end{proof}

\begin{proof}[Proof of  Proposition~\ref{prop:local analytical solution}]
    For problem \eqref{eq:local_problem_qp}, we consider its Lagrangian
    \begin{equation*}
    \mathcal{L}(\myvar{x}_i, {\myvar{c}_i}) =  \frac{1}{2}\lVert \myvar{x}_i - \myvar{x}_{nom,i} \rVert^2 + \myvar{c}_i^\top\left(A_i^\top  \myvar{x}_i + (I_M\otimes \myvar{l}_i) \myvar{y}+ \myvar{b}_i  \right) 
\end{equation*}
where $\myvar{c}_i = (c_i^1, c_i^2,...,c_i^M) \in \mathbb{R}^{M}$ is the Lagrange multiplier. Since Slater's condition holds and the  problem \eqref{eq:local_problem_qp} is a linearly constrained convex optimization problem, $\myvar{x}_i$ is an optimal solution if and only if $\myvar{c}_i$ exists and satisfies the following 
Karush–Kuhn–Tucker (KKT) conditions \cite[Theorem 10.6]{beck2014introduction}
\begin{subequations}
    \begin{align}
        &\myvar{x}_i - \myvar{x}_{nom,i} + A_i \myvar{c}_i = \myvar{0} \label{eq:optimality_for_u} \\
        &  A_i^\top  \myvar{x}_i + (I_M\otimes \myvar{l}_i ) \myvar{y}+ \myvar{b}_i  \leq \myvar{0} \label{eq:primal_feasi} \\
        & \myvar{c}_i \geq 0  \label{eq:dual_feasi}\\
        & \myvar{c}_i \circ \left(A_i^\top  \myvar{x}_i + (I_M\otimes \myvar{l}_i ) \myvar{y}+ \myvar{b}_i  \right) = \myvar{0}   \label{eq:slackness}
    \end{align}
\end{subequations}
where $\circ$ denotes Hadamard Product, i.e., pointwise multiplication. To solve the KKT system, we obtain from \eqref{eq:optimality_for_u} that
\begin{equation}\label{eq:relation_u_unom}
    \myvar{x}_i =  \myvar{x}_{nom,i} - A_i\myvar{c}_i =  \myvar{x}_{nom,i} - \sum_{m=1}^M c_i^m{\myvar{a}_i^m}  
\end{equation}
Substituting it to \eqref{eq:primal_feasi} and \eqref{eq:slackness}, we know
\begin{subequations}
    \begin{align}
        & A_i^\top \myvar{x}_{nom,i} - A_i^\top A_i \myvar{c}_i + (I_M\otimes \myvar{l}_i)\myvar{y} + \myvar{b}_i \leq 0 \\
        & \myvar{c}_i\circ(A_i^\top \myvar{x}_{nom,i} - A_i^\top A_i \myvar{c}_i + (I_M\otimes \myvar{l}_i)\myvar{y} + \myvar{b}_i) = 0
    \end{align}
\end{subequations}

Next, we consider two disjoint cases for the $m$-th coupled inequality. Before proceeding, we denote $\mathcal{C}_m:=\{1, \dots, m-1, m+1, \dots, M \}$. We assume that the Lagrangian variables corresponding to the index set $\mathcal{C}_m$ are known, i.e., $c_i^p,  p\in \mathcal{C}_m$ are known.  Define $\myvar{x}^{m \prime}_{nom,i} = \myvar{x}_{nom,i} - \sum_{p\in \mathcal{C}_m} c_i^p \myvar{a}_i^p, \myvar{b}_i^{\prime} = (I_M\otimes \myvar{l}_i)\myvar{y} + \myvar{b}_i$. Then $\myvar{x}_i 
 = \myvar{x}^{m \prime}_{nom,i} - c_i^m \myvar{a}_i^m$ from \eqref{eq:relation_u_unom}. Define $\Lambda_i = \text{Diag}({A}_i^\top{A}_i) \in \mathbb{R}^{M\times M} $.  Consider the following cases:

\begin{enumerate}

    \item  $[{A_i^m}^\top \myvar{x}^{m \prime}_{nom,i}  + \myvar{b}_i^\prime ]_m \leq  0$. It is clear from \eqref{eq:slackness} that 
    \begin{equation} \label{eq:c_inactive}
    c_i^m = 0.
\end{equation}
    Then $c_i^m = 0$ together with $c_i^p, p \in \mathcal{C}_m$ is the Lagrange multiplier.

 \item $[{A_i^m}^\top \myvar{x}^{m \prime}_{nom,i}  + \myvar{b}_i^\prime ]_m > 0$. Suppose $c_i^m = 0$ in this case, then $\myvar{x}_i =\myvar{x}^{m \prime}_{nom,i} $ yet it fails the constraint ${A_i^m}^\top \myvar{x}_{i}  + \myvar{b}_i^\prime \leq \myvar{0}$.  By contradiction, we have $c_i^m>0$.  Therefore, to satisfy \eqref{eq:slackness}, there must hold
    \begin{equation}  \label{eq:case2_original_condition}
         [{A_i}^\top \myvar{x}^{m \prime}_{nom,i}  + \myvar{b}_i^\prime   -  c_i^m A_i^{\top}\myvar{a}_i^m]_m = 0. 
    \end{equation} 
   
Thus,
\begin{equation}
   \myvar{a}_i^{m \top} \myvar{a}_i^m c_i^m = [{A_i^m}^\top \myvar{x}^{m \prime}_{nom,i}  + \myvar{b}_i^\prime ]_m 
\end{equation}
Recall $ [\Lambda_{i}]_{m,m}   =  \myvar{a}_i^{m \top} \myvar{a}_i^m$, $\myvar{x}^{m \prime}_{nom,i} = \myvar{u}_{nom,i} - {A}_i \myvar{c}_i + c_i^m \myvar{a}_i^m$. Thus
\begin{equation} \label{eq:c_active2}
\begin{aligned}
        \hspace{-3mm} [\Lambda_i]_{m,m}  c_i^m  
     = [{A}_i^\top (\myvar{u}_{nom,i} - {A}_i \myvar{c}_i) + \myvar{b}_i^\prime  + \Lambda_i \myvar{c}_i ]_m 
\end{aligned}
\end{equation}

    \end{enumerate}

Summarizing the two cases in  \eqref{eq:c_inactive} and \eqref{eq:c_active2}, we obtain $c_i^m, m\in \myset{M}$ satisfies 
\begin{equation*}
      [\Lambda_i]_{m,m}  c_i^m  
     = \max \left( [{A}_i^\top (\myvar{x}_{nom,i} - {A}_i \myvar{c}_i) + \myvar{b}_i^\prime + \Lambda_i \myvar{c}_i ]_m, 0\right).
\end{equation*}
Stacking it together, we conclude that 
\begin{equation} \label{eq:algebraic condition on c_i}
\begin{aligned}
     \hspace{-3mm}   \Lambda_i \myvar{c}_i &  = \max \left({A}_i^\top \myvar{x}_{nom,i} +\myvar{b}_i^\prime  + (\Lambda_i - {A}_i^\top{A}_i) \myvar{c}_i,\myvar{0}\right)
\end{aligned}
\end{equation}  

Up to now we have shown that if $(\myvar{x}_i, \myvar{c}_i)$ 
 satisfies the KKT condition, then $\myvar{c}_i$ satisfies \eqref{eq:algebraic condition on c_i}. In the following we show that whenever $\myvar{c}_i$ solves \eqref{eq:algebraic condition on c_i}, the pair $(\myvar{x}_i, \myvar{c}_i)$ with $\myvar{x}_i = \myvar{x}_{nom,i} - A_i \myvar{c}_i$ fulfills the KKT conditions. One verifies that \eqref{eq:optimality_for_u} and \eqref{eq:dual_feasi} are trivially satisfied. What remains to show is \eqref{eq:primal_feasi} and \eqref{eq:slackness}. Denote by $c_i^p, p\in \mathcal{C}_m$ as part of the solution to \eqref{eq:algebraic condition on c_i}. Now $c_i^p, p\in \mathcal{C}_m$ are not assumed to be the correct multipliers but will be proved so later. Recall $\myvar{x}_{nom,i}^{m \prime} = \myvar{x}_{nom,i} - \sum_{p\in \mathcal{C}_m} c_i^p \myvar{a}_i^p$. If $[{A_i}^\top \myvar{x}^{m \prime}_{nom,i}  + \myvar{b}_i^\prime]_m \leq  0$, we know \eqref{eq:algebraic condition on c_i} dictates $c_i^m = 0$.  If $ [A_i^\top \myvar{x}^{m\prime}_{nom,i}  + \myvar{b}_i^\prime  ]_m>0 $, then \eqref{eq:algebraic condition on c_i} dictates $\myvar{a}_i^{m \top} \myvar{a}_i^m c_i^m = [{A}_i^{\top} \myvar{x}_{nom,i}^{m \prime} + \myvar{b}_i^\prime]_m    $.  In both cases,  the parts in \eqref{eq:primal_feasi} and \eqref{eq:slackness} corresponding to the $m$-th constraint are fulfilled. This analysis holds for all $m\in \myset{M}$. This concludes the proof.
\end{proof}

\bibliographystyle{IEEEtran}
\bibliography{IEEEabrv,references}

\end{document}